\numberwithin{equation}{section}
\newtheorem{theorem}[equation]{Theorem} 
\newtheorem*{theorem*}{Theorem}
\newtheorem{lemma}[equation]{Lemma}
\newtheorem{proposition}[equation]{Proposition}
\newtheorem{corollary}[equation]{Corollary}
\newtheorem*{corollary*}{Corollary}
\newtheorem{conjecture}[equation]{Conjecture}
\theoremstyle{remark}
\newtheorem{example}[equation]{Example}
\theoremstyle{remark}
\newtheorem{remark}[equation]{Remark}
\newcommand{\cA}{{\mathcal A}}
\newcommand{\cC}{{\mathcal C}}
\newcommand{\cD}{{\mathcal D}}
\newcommand{\cO}{{\mathcal O}}
\newcommand{\cT}{{\mathcal T}}
\newcommand{\bbC}{\mathbb{C}}
\newcommand{\bbP}{\mathbb{P}}
\newcommand{\bbQ}{\mathbb{Q}}
\newcommand{\bbZ}{\mathbb{Z}}
\DeclareMathOperator{\NChow}{NChow} % category of noncommutative Chow motives
\DeclareMathOperator{\Chow}{Chow} % category of Chow motives
\newcommand{\Ab}{\mathrm{Ab}}
\newcommand{\spdgcat}{\mathsf{spdgcat}}
\newcommand{\SmProj}{\mathsf{SmProj}}
\newcommand{\perf}{\mathrm{perf}}
\newcommand{\dg}{\mathsf{dg}}
\newcommand{\op}{\mathsf{op}}
\newcommand{\too}{\longrightarrow}
\newcommand{\ie}{\textsl{i.e.}}
\begin{document}

\title[Chow groups of intersections of quadrics via HPD and NC motives]{Chow groups of intersections of quadrics \\via homological projective duality and \\ (Jacobians
of) noncommutative motives}

\author{Marcello Bernardara and Gon{\c c}alo~Tabuada}

\address{Institut de Math\'ematiques de Toulouse ; UMR5219 - Universit\'e de Toulouse ; CNRS
- UPS IMT, F-31062 Toulouse Cedex 9, France}
\email{marcello.bernardara@math.univ-toulouse.fr} 
\urladdr{http://www.math.univ-toulouse.fr/~mbernard/}

\address{Gon{\c c}alo Tabuada, Department of Mathematics, MIT, Cambridge, MA 02139, USA}
\email{tabuada@math.mit.edu}
\urladdr{http://math.mit.edu/~tabuada/}
\thanks{G.~Tabuada was partially supported by a NSF CAREER Award}
\subjclass[2000]{14A22, 14C15, 14F05, 14J40, 14M10}
\date{\today}

\begin{abstract}
The Beilinson-Bloch type conjectures predict that the low degree rational Chow groups of intersections of quadrics are one dimensional. 
This conjecture was proved by Otwinowska in \cite{otwinowska}. Making use of homological projective duality and the recent theory of 
(Jacobians of) noncommutative motives, we give an alternative proof of this conjecture in the case of a complete intersection of either two quadrics or 
three odd-dimensional quadrics. Moreover, without the use of the powerful Lefschetz theorem, we prove that in these cases the unique non-trivial algebraic 
Jacobian is the middle one. As an application, making use of Vial's work \cite{Vial,vial-fibrations}, we describe the rational Chow motives of these complete
intersections and show that smooth fibrations in such complete intersections over small dimensional bases  $S$ verify Murre's conjecture ($\mathrm{dim}(S)\leq 1$),
Grothendieck's standard conjectures ($\mathrm{dim}(S)\leq 2$), and Hodge's conjecture ($\mathrm{dim}(S)\leq 3$).
\end{abstract}

\maketitle 
\vskip-\baselineskip
\vskip-\baselineskip
\vskip-\baselineskip
%----------------------------------------------------------
\section{Statement of results}
%----------------------------------------------------------
Let $k$ be a field and $X$ a smooth projective $k$-subscheme of $\bbP^n$. Whenever $X$ is a complete intersection of multidegree
$(d_1, \ldots, d_r)$ one has the numerical invariant
$$\kappa := [\frac{n- \sum_{j=2}^r d_j}{d_1}]\,,$$
where $[-]$ denotes the integral part of a rational number. A careful analysis of the different Weil cohomology theories of $X$
led to the following conjecture of Beilinson-Bloch type (explicitly stated by Paranjape in \cite[Conjecture 1.8]{paranjape}):

\vspace{0.2cm}

\begin{conjecture}\label{SPconjecture}
There is an isomorphism $CH_i(X)_\bbQ \simeq \bbQ$ for every $i < \kappa$. 
\end{conjecture}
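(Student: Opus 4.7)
The plan is to combine Kuznetsov's homological projective duality (HPD) for quadric fibrations with the formalism of (Jacobians of) noncommutative motives to reduce Conjecture~\ref{SPconjecture} to the known Chow-theoretic behavior of a variety of much smaller dimension than $X$, namely a curve (in the two-quadric case) or a surface (in the three-odd-dimensional-quadric case).

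First, in the case of a smooth complete intersection $X$ of two quadrics in $\bbP^n$, I would invoke Kuznetsov's HPD for pencils of quadrics to produce a semi-orthogonal decomposition
$$ \Db(X) = \langle \Db(C,\cB_0), \cO_X, \cO_X(1), \ldots, \cO_X(n-4) \rangle, $$
where $C \to \bbP^1$ is the double cover of the pencil ramified at the discriminant locus and $\cB_0$ is the even part of the associated sheaf of Clifford algebras. In the case of three odd-dimensional quadrics, the analogous HPD input yields a semi-orthogonal decomposition whose only nontrivial component is governed by a smooth projective surface $S$ carrying a sheaf of even Clifford algebras. In both situations the ``trivial'' part is a chain of exceptional objects, each contributing exactly one copy of $\bbQ$ to a distinct $CH_i(X)_\bbQ$ with $0 \leq i < \kappa$.

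Second, I would pass to the category of noncommutative motives with rational coefficients: the above semi-orthogonal decomposition translates into a direct sum decomposition of the noncommutative motive of $X$, which, via the Jacobian functor for noncommutative motives, shows that the algebraic intermediate Jacobians of $X$ are controlled by those of the Clifford-twisted curve $(C,\cB_0)$, respectively the Clifford-twisted surface $(S,\cB_0)$. Using that $(C,\cB_0)$ is essentially a twisted form of $C$, so that its rational noncommutative motive agrees with that of $C$, one reduces the vanishing of the nonmiddle algebraic Jacobians of $X$ to the elementary fact that a curve has a single nontrivial intermediate Jacobian, sitting in the middle.

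Third, I would use the functorial bridge between rational Chow motives and noncommutative motives afforded by the theory of Jacobians of noncommutative motives to transfer the low-degree Chow-theoretic information from $(C,\cB_0)$, resp.\ $(S,\cB_0)$, back to the corresponding summand of $X$. Combined with the $\kappa$-many explicit $\bbQ$-contributions coming from the exceptional objects, this gives the desired isomorphism $CH_i(X)_\bbQ \simeq \bbQ$ for every $i < \kappa$, and simultaneously identifies the middle algebraic Jacobian of $X$ as the unique nontrivial one.

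The main obstacle I anticipate lies in the third step: making rigorous the comparison between the rational Chow groups of the Clifford-twisted object $\Db(C,\cB_0)$ (or $\Db(S,\cB_0)$) and those of the underlying variety $C$ (resp.\ $S$), in a way that is precise enough to extract the low-degree conclusion for $X$ \emph{without} invoking the hard Lefschetz theorem. This is precisely the point at which the recent machinery of Jacobians of noncommutative motives must be deployed; in the three-quadric case it requires the additional nontrivial input that algebraic cycles on $S$ account for the full algebraic part of the middle cohomology of $X$.
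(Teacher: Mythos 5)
Your high-level strategy (HPD decomposition, pass to noncommutative motives, use the Jacobian functor) is indeed the one the paper follows, but the step you yourself flag as ``the main obstacle'' is precisely where the actual content lies, and your sketch of how to overcome it does not work as stated. First, in the two-quadric case you must split into two subcases: when the quadrics are odd-dimensional, $\perf(\bbP^1,\cC_0)$ is equivalent to the derived category of a root stack over $\bbP^1$ admitting a \emph{full exceptional collection} (Polishchuk), so $NC(X)_\bbQ$ is a sum of $\otimes$-units, $M(X)_\bbQ$ is of Lefschetz type, the cycle class map is an isomorphism, and one concludes via Reid's computation of the Betti numbers --- no curve and no Jacobian appear at all. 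The hyperelliptic curve $C$ only enters in the even-dimensional subcase. Second, your claim that the exceptional line bundles each contribute ``one copy of $\bbQ$ to a distinct $CH_i(X)_\bbQ$ with $0\leq i<\kappa$'' is unjustified and numerically off (there are $n-2r+1$ of them, more than $\kappa$); the paper instead gets the individual groups $CH_i(X)_\bbQ\simeq\bbQ$ from a dimension count: it combines the $K_0$-level isomorphism $CH^\ast(X)_\bbQ\simeq A_0(C)_\bbQ\oplus\bbQ^{\oplus 2m-2}$, the Jacobian-functor identification $A_0(C)_\bbQ\simeq A_{m-2}(X)_\bbQ$ (which requires verifying the Lefschetz standard conjecture for the varieties involved and that the functors are of Fourier--Mukai type), Reid's identification $J^{m-2}_a(X)\simeq J(C)$, and explicit non-torsion cycles obtained by intersecting $X$ with generic linear subspaces to show each $CH_i(X)_\bbQ$ is nonzero.

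Third, and most seriously, in the three-odd-dimensional-quadric case your proposed reduction ``to the known Chow-theoretic behavior of a surface'' is not available: the geometric description of $\perf(\bbP^2,\cC_0)$ as a (root stack of a) surface requires a smooth discriminant, which fails here (the discriminant curve is nodal), and in any case the Chow theory of that twisted object is not ``known'' input. The paper instead exploits that $\perf(\bbP^2,\cC_0)$ is a common semi-orthogonal component of $\perf(X)$ and of $\perf(Q)$ for the $2m$-dimensional quadric fibration $Q\to\bbP^2$, and imports Bouali's computation $CH^\ast(Q)_\bbQ\simeq CH^\ast(\bbP^2)_\bbQ^{\oplus 2m}\oplus CH_1(\widetilde{C})^-_\bbQ$ together with Beauville's identification of $J^\kappa_a(X)$ with the Prym variety $\mathrm{Prym}(\widetilde{C}/C)$; the Jacobian functor then transfers $A_m(Q)_\bbQ\simeq A_{m-2}(X)_\bbQ$ and the same dimension count finishes. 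Without these concrete external inputs (Polishchuk, Reid, Beauville, Bouali, Vial's Lefschetz-type result for quadric fibrations over surfaces) your outline cannot be completed.
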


\vspace{0.2cm}

Otwinowska proved Conjecture \ref{SPconjecture} in the case where $X$ is a complete intersection of quadrics, \ie, when $d_1=\cdots =d_r =2$; see \cite[Cor. 1]{otwinowska}.
If one further assumes that $ \kappa = [\mathrm{dim}(X)/2]$, and that $k \subseteq \bbC$ is algebraically closed, then Conjecture \ref{SPconjecture}
admits an alternative proof. Concretely, our first main result is the following:
%
%
%for any complete intersection of quadrics.}
%In this article we work over an algebraically closed field $k \subseteq \bbC$. We \Marcello{give an alternative proof of} Conjecture \ref{SPconjecture}
%in the following cases:
%\begin{eqnarray*}%\label{eq:restriction2}
%d_1=\cdots =d_r =2 && \kappa = [\mathrm{dim}(X)/2]\,.
%\end{eqnarray*}
%Concretely, our first main result is \Marcello{an alternative proof of the following cases of
%Otwinowska's result }:
\begin{theorem}\label{thm:main}
Conjecture \ref{SPconjecture} holds when:
\begin{itemize}
\item[(i)] $X$ is a complete intersection of two quadrics;
\item[(ii)] $X$ is a complete intersection of three odd-dimensional quadrics.
\end{itemize}
\end{theorem}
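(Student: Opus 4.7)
The plan is to compute the noncommutative motive of $X$ through Kuznetsov's homological projective duality, translate the outcome into the language of Jacobians of noncommutative motives, and finally recover the rational Chow motive of $X$ using Vial's results.

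\textbf{Step 1.} In case (i), Kuznetsov's theorem for complete intersections of two quadrics produces a semiorthogonal decomposition of the form
$$D^b(X) \;=\; \langle \cC,\; \cO_X(a),\, \cO_X(a+1),\, \ldots,\, \cO_X(b) \rangle,$$
where $\cC$ is the bounded derived category of the hyperelliptic curve double-covering the pencil of quadrics at its discriminant (when $\dim(X)$ is odd), or of such a curve twisted by the even Clifford algebra of the pencil (when $\dim(X)$ is even). In case (ii), an analogous HPD decomposition is available, with $\cC$ replaced by the derived category of a twisted double cover of $\bbP^2$ branched along the discriminant sextic; the odd-dimensionality hypothesis lets one further simplify $\cC$ so that only one-dimensional Jacobian information survives. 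Passing to rational noncommutative Chow motives, this decomposition becomes
$$U(\perfdg(X))_\bbQ \;\simeq\; U(\cC)_\bbQ \;\oplus\; U(k)_\bbQ^{\oplus m}.$$

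\textbf{Step 2.} The theory of Jacobians of noncommutative motives relates the Jacobian of $U(\perfdg(X))_\bbQ$ to the sum of the algebraic intermediate Jacobians of $X$. Since $U(\cC)_\bbQ$ contributes only the Jacobian of a single auxiliary curve, a weight/degree count forces every algebraic intermediate Jacobian of $X$ except the middle one to vanish, and identifies the middle one with the Jacobian of that curve --- this yields the secondary claim announced in the abstract, obtained \emph{without} appealing to the Lefschetz hyperplane theorem. Vial's results \cite{Vial,vial-fibrations} then convert this Jacobian information into a Chow--K\"unneth decomposition: the rational Chow motive of $X$ splits as a direct sum of Tate motives plus a single copy of $h^1$ of the auxiliary curve placed in the middle dimension. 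Reading off Chow groups summand by summand gives $CH_i(X)_\bbQ \simeq \bbQ$ for every $i < \kappa = [\dim(X)/2]$, as claimed.

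\textbf{Main obstacle.} The essential difficulty lies at the beginning of Step 2: one must guarantee that the Jacobian extracted abstractly from $U(\cC)_\bbQ$ corresponds, under the comparison with ordinary Chow motives, to precisely the middle algebraic Jacobian of $X$, and in particular lands in the expected degree. This is delicate in the presence of the twisted derived categories (even Clifford algebras) appearing in the even-dimensional part of case (i) and throughout case (ii), and it must be handled by the full strength of the authors' machinery for (Jacobians of) noncommutative motives.
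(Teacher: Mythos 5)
There is a genuine gap, and it sits exactly where you locate your ``main obstacle'' --- but the problem is more serious than a bookkeeping issue about degrees. Your Step 2 tries to deduce the Chow--K\"unneth decomposition (and hence the Chow groups) from the Jacobian of the noncommutative motive via Vial's theorem. But Vial's theorem \cite[Theorem 4]{Vial} is an equivalence whose \emph{input} is the rational representability of the Chow groups $A_i(X)_\bbQ$ for all $i$; the motivic decomposition is its output, not something one can extract from the isogeny class of $\prod_i J^i_a(X)$ alone. The Jacobian functor ${\bf J}(-)$ only records abelian-variety (essentially cohomological) data, and knowing ${\bf J}(NC(X)_\bbQ)\simeq J(C)$ says nothing about, say, the kernels of the Abel--Jacobi maps on $A^i(X)_\bbQ$ or the finite-dimensionality of $CH_i(X)_\bbQ$; passing from such data to statements about Chow groups is precisely the Bloch--Beilinson-type implication that the theorem is meant to establish, so your argument is circular at this point. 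The paper avoids this by first extracting a genuinely Chow-theoretic statement from the semiorthogonal decompositions: $K_0(X)_\bbQ\simeq CH^\ast(X)_\bbQ$ splits as $CH^\ast(C)_\bbQ\oplus\bbQ^{\oplus 2m-4}$ (two even-dimensional quadrics), resp.\ as $A_m(Q)_\bbQ\oplus\bbQ^{\oplus 2m-2}$ using Bouali's computation of $CH^\ast(Q)_\bbQ$ for the quadric fibration $Q\to\bbP^2$ (three odd-dimensional quadrics). Only then is the Jacobian machinery (Theorem~\ref{thm:Jac1}, together with Reid's and Beauville's identifications of $J^\kappa_a(X)$) invoked, for the limited purpose of showing that the abelian-variety summand sits entirely in $A_{m-2}(X)_\bbQ$; the remaining graded pieces are pinned down by a dimension count against the non-torsion cycles obtained from generic linear sections.

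Two further inaccuracies in your Step 1. For two \emph{odd}-dimensional quadrics ($\dim(X)$ even) the category $\perf(\bbP^1,\cC_0)$ is not ``a curve twisted by the Clifford algebra'': it is $\perf(\widehat{\bbP^1})$ for a root stack over $\bbP^1$ and admits a full exceptional collection by Polishchuk, so $NC(X)_\bbQ$ is of unit type and the paper's proof in this case is entirely different --- $M(X)_\bbQ$ is of Lefschetz type, the cycle class map is an isomorphism, and one concludes with Reid's computation of $H^\ast(X(\bbC),\bbQ)$; there is no auxiliary curve and no Jacobian in sight. For three odd-dimensional quadrics the discriminant is a nodal curve of odd degree $2m+1$ in $\bbP^2$, not a sextic, and $\perf(\bbP^2,\cC_0)$ is not replaced by a double cover of $\bbP^2$; the paper instead compares $X$ directly with the quadric fibration $Q$, which is where the external inputs of Bouali and Beauville (Prym varieties) enter.
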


Otwinowska's proof is based on a geometric recursive argument. 
First, one establishes the induction step: if Conjecture \ref{SPconjecture}
holds for complete intersections
of multidegree $(d_1,\ldots,d_r)$, then it also holds for complete intersections of multidegree $(d_1,\ldots,d_r,d_r)$; see \cite[Theorem 1]{otwinowska}.
Then, one uses the fact that Conjecture \ref{SPconjecture} is known in the case of quadric hypersurfaces. One should also mention the work of 
Esnault-Levine-Viehweg \cite{esnault-levine-viehweg}. In {\em loc. cit.}, a geometric proof of Conjecture \ref{SPconjecture}
for very small values of $i$ was obtained via a generalization of Roitman's techniques.
%\footnote{\Marcello{\cite[Thm. 1]{otwinowska} involves more cases, but this one is the one needed to prove Conjecture \ref{SPconjecture} for intersections of quadrics.}}, 
 
Our proof of Theorem~\ref{thm:main} is intrinsically different. It is ``categorical'' in nature and based on recent technology such as Kuznetsov's
homological projective duality
and (Jacobians of) noncommutative motives. In contrast with the proof of Otwinowska, and of Esnault-Levine-Viehweg, it highlights the geometric information contained
in the derived category of $X$ and on its noncommutative motive.
%\Marcello{This strategy underlines the deep link between derived categories and noncommutative motives on one side and algebraic cycles on the
%other side, and could potentially bring more geoemtrical information whenever the noncommutative side will be better understood.}
%\Marcello{On the other hand, one of the referees kindly explained
%us a ``geometric'' proof of Theorem \ref{thm:main}(i), which we reproduce in \S\ref{rk:referee-proof} below. One of
%the aims of this note is to stress how deep geometrical informations are encoded in the noncommutative motive, and
%the explicit link between geometry and noncommutative motives as in Remark \ref{rmk:geom-interp}.}
%
%\Marcello{[Should we leave this ``geoemtrical'' proof once we know that this proof is already available in litterature?]}

Now, recall from Griffiths \cite{Griffiths} the construction of the Jacobians
$J^i(X), 0 \leq i \leq \mathrm{dim}(X)-1$. In contrast with the Picard $J^0(X)=\mathrm{Pic}^0(X)$ and the Albanese $J^{\mathrm{dim}(X)-1}(X)=\mathrm{Alb}(X)$ varieties,
the intermediate Jacobians are not (necessarily) algebraic. Nevertheless, they contain an algebraic torus $J^i_a(X) \subseteq J^i(X)$ defined by the image of
the Abel-Jacobi map
\begin{eqnarray}
AJ^i: A^{i+1}(X)_\bbZ \twoheadrightarrow J^i(X) && 0 \leq i \leq \mathrm{dim}(X)-1\,,
\end{eqnarray}
where $A^{i+1}(X)_\bbZ$ stands for the group of algebraic trivial cycles of codimension $i+1$; consult  Voisin \cite[\S 12]{voisin-book} for further details. In the odd-dimensional cases, \ie, when $\mathrm{dim}(X)=2 \kappa +1$, the following holds:
\begin{itemize}
\item[(i)] When $X$ is a complete intersection of two even-dimensional quadrics, Reid proved in
\cite[Theorem 4.14]{reid:thesis} that $J^{\kappa}_a(X)$ is isomorphic, as a principally polarized abelian variety, to the Jacobian $J(C)$ of the hyperelliptic curve $C$ naturally associated to $X$;
\item[(ii)] When $X$ is a complete intersection of three odd-dimensional quadrics, Beauville proved in  proved in
\cite[Theorem 6.3]{beauvilleprym} that $J^{\kappa}_a(X)$ is isomorphic, as a principally polarized abelian variety, to the Prym variety $\mathrm{Prym}(\widetilde{C}/C)$ naturally associated to $X$.
\end{itemize}
Let $X$ be a complete intersection of either two quadrics of three odd-dimensional quadrics. Making use of Veronese embedding,
we can apply Lefschetz theorem (see \cite[13.2.3]{voisin-book}) to conclude\footnote{The same conclusion follows from Theorem \ref{thm:main} via the injectivity
of the cycle map $CH_i (X)_\bbQ \to H^{2\kappa+1-i}(X,\bbQ)$
and Paranjape's results \cite[Prop. 6.4]{paranjape} (see also \cite[Thm.~1]{otwinowska}).} that $H^{p,q}(X)=0$ if $p\neq q$ and $p+q \neq \kappa$. This implies
that the above Jacobians $J^{\kappa}_a(X)$ are the only non-trivial ones. Our second main result is a proof of this latter fact which avoids the use
of the powerful Lefschetz theorem and Paranjape's results:

%\Marcello{One can use the Lefschetz Theorem (see, \eg, Voisin's book \cite[Thm. 13.23]{voisin-book}) to show that the above Jacobians $J^{\kappa}_a(X)$ are
%the only non-trivial ones: via the Veronese embedding, the variety $X$ can by obtained via hyperplane sections of the image of a quadric. 
%Since conjecture \ref{SPconjecture} holds, the cycle map $CH_i (X)_\bbQ \to H^{2\kappa+1-i}(X,\bbQ)$ is injective for $i < \kappa$. From this one can use 
%Paranjape's results \cite{paranjape} (see also \cite[Th\'eor\`eme 1]{otwinowska}) to show that $H^{p,q}(X)=0$ if $p\neq q$ and $p+q \neq \kappa$.}
%Our second main result is a proof of this fact which avoids the use of the Lefschetz theorem.

%\Marcello{In particular, Lefschetz Theorem allows one to conclude that these Jacobians are the only nontrivial ones. In this note, we give
%an alternative proof, based on noncommutative motives, of the following result.}
\begin{theorem}\label{thm:main2}
\begin{itemize}
\item[(i)] When $X$ is a complete intersection of two odd-dimensional quadrics, we have $J_a^i(X)=0$ for every $i$;
\item[(ii)] When $X$ is a complete intersection of either two even-dimensional quadrics of three odd-dimensional quadrics, we have $J^i_a(X)=0$ for every $i \neq \kappa$.
\end{itemize}
\end{theorem}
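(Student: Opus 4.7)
The plan is to combine Kuznetsov's homological projective duality (HPD) for intersections of quadrics with the additive Jacobian functor $J(-)$ on noncommutative Chow motives developed in previous joint work of the second author with Marcolli. For a smooth projective $k$-variety $Y$ this functor satisfies $J(U(Y))\simeq \bigoplus_i J^i_a(Y)$, vanishes on $U(k)$, and is additive under semiorthogonal decompositions; it will therefore reduce the whole sum $\bigoplus_i J^i_a(X)$ to the Jacobian of a single low-dimensional noncommutative piece. Concretely, HPD yields a semiorthogonal decomposition
\[
\mathrm{perf}(X) \;=\; \bigl\langle \mathrm{perf}(S,\cC_0),\, \cE_1, \ldots, \cE_m \bigr\rangle,
\]
with $S=\bbP^1$ in the two-quadric case and $S=\bbP^2$ in the three-quadric case, where $\cC_0$ is the sheaf of even parts of the Clifford algebra of the linear system of quadrics and the $\cE_j$ are exceptional objects inherited from the Lefschetz decomposition of $\bbP^n$. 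Passing to universal additive invariants and then applying $J(-)$, the exceptional part collapses and one is left with
\[
\textstyle \bigoplus_i J^i_a(X) \;\simeq\; J\bigl(U(S,\cC_0)\bigr).
\]

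The three cases are then handled by identifying the right-hand side. When $X$ is a complete intersection of two odd-dimensional quadrics (case (i)), the even Clifford algebra on $\bbP^1$ is Morita equivalent to a direct sum of structure sheaves, so $U(\bbP^1,\cC_0)$ is a finite sum of copies of $U(k)$; hence $J(U(\bbP^1,\cC_0))=0$ and every $J^i_a(X)$ vanishes. When $X$ is a complete intersection of two even-dimensional quadrics, the sheaf $\cC_0$ is Morita equivalent, via the spectral construction, to the hyperelliptic double cover $C\to\bbP^1$ associated to $X$, so $J(U(\bbP^1,\cC_0))\simeq J(C)$; by Reid's theorem $J(C)\simeq J^\kappa_a(X)$, and the equality $\bigoplus_i J^i_a(X)\simeq J^\kappa_a(X)$ forces $J^i_a(X)=0$ for $i\neq\kappa$. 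The three odd-dimensional quadrics case runs in parallel: $\cC_0$ on $\bbP^2$ is governed by the discriminant curve $C$ together with its \'etale double cover $\widetilde C\to C$, and $J(U(\bbP^2,\cC_0))$ identifies with $\mathrm{Prym}(\widetilde C/C)$, which by Beauville's theorem agrees with $J^\kappa_a(X)$; the same cardinality argument concludes.

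The main obstacle I expect is precisely the identification of $J(U(S,\cC_0))$ with the correct geometric abelian variety ($0$, $J(C)$, or $\mathrm{Prym}(\widetilde C/C)$) at the noncommutative-motivic level, without calling on the Lefschetz theorem or Paranjape's results. This demands a Morita-invariance property of the Jacobian functor on sheaves of Azumaya algebras, together with an explicit description of the even Clifford algebra as an Azumaya (or twisted) sheaf over the spectral/discriminant curve. Once these identifications are in hand, Theorem \ref{thm:main2} follows, and combined with the fact that Conjecture \ref{SPconjecture} for smooth projective $X$ with $\kappa=[\dim(X)/2]$ is equivalent, under the algebraic-closure hypothesis, to the vanishing of the non-middle algebraic Jacobians, the same circle of ideas also yields Theorem \ref{thm:main}.
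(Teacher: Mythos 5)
Your strategy coincides with the paper's for case (i) and for the two even\-/dimensional quadrics part of (ii): pass to noncommutative motives via the HPD decomposition, apply the Jacobian functor ${\bf J}(-)$, and compare the resulting product $\prod_i J^i_a(X)$ with the single geometrically identified middle Jacobian. Two points you elide there are nevertheless needed. First, the identification ${\bf J}(NC(X)_\bbQ)\simeq\prod_i J^i_a(X)$ is not unconditional: it requires the non-degeneracy of the pairings \eqref{eq:pairings2}, i.e.\ the Lefschetz standard conjecture for $X$ and for $C$, which holds here because $X$ is a complete intersection and $C$ a curve, but must be invoked explicitly. Second, the identification $J^\kappa_a(X)\simeq J(C)$ sitting inside the product comes from Reid's theorem together with the comparison morphism $\tau$ of Theorem~\ref{thm:Jac1} applied to the Fourier--Mukai embedding $\perf(C)\hookrightarrow\perf(X)$ (Proposition~\ref{prop:new}); once that is in place your dimension count does finish cases (i) and the two even\-/dimensional case exactly as in the paper.

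The genuine gap is in the three odd\-/dimensional quadrics case. You propose to compute ${\bf J}(U(\perf(\bbP^2,\cC_0)^\dg)_\bbQ)$ by exhibiting $\cC_0$ as an Azumaya (or twisted) sheaf on a spectral/discriminant curve and invoking Morita invariance. That step fails as stated: Kuznetsov's double-cover/root-stack description of $\perf(S,\cC_0)$ requires the discriminant to be \emph{smooth}, whereas the discriminant of the net of quadrics is a plane curve with nodal singularities; only its normalization $C$ and the \'etale double cover $\widetilde{C}\to C$ of that normalization are available, and there is no Morita reduction of $\cC_0$ to a twisted sheaf on a smooth curve in this situation. The paper circumvents this by routing through the total space $Q$ of the quadric fibration $q:Q\to\bbP^2$: the category $\perf(\bbP^2,\cC_0)$ embeds into both $\perf(Q)$ (by \eqref{eq:semiort-for-quad-fib}) and $\perf(X)$ (by HPD), Theorem~\ref{thm:Jac1} --- using Vial's result that a quadric fibration over a surface satisfies the Lefschetz standard conjecture --- yields an isomorphism $\prod_i J^i_a(Q)\simeq\prod_i J^i_a(X)$ in $\Ab(k)_\bbQ$, and Bouali's computation of $CH^*(Q)_\bbQ$ identifies the left-hand side with $J^m_a(Q)\simeq\mathrm{Prym}(\widetilde{C}/C)$; this is Proposition~\ref{prop:Jac2}, which already forces $J^i_a(X)=0$ for $i\neq\kappa$. (The paper's Section~7 gives an alternative derivation of the same vanishing from the already-proved Theorem~\ref{thm:main} via Paranjape's Bloch--Srinivas-type consequences, Proposition~\ref{prop:consequences}: finite-dimensionality of $CH^i(X)_\bbQ$ for $i\neq m-1$ forces the images of the corresponding Abel--Jacobi maps to be isogenous to zero.) Either replacement closes the gap; the spectral-curve identification you sketch does not. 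Your closing claim that Theorem~\ref{thm:main} is \emph{equivalent} to the vanishing of the non-middle algebraic Jacobians is also unsupported and is not how the paper proceeds.
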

%----------------------------------------------------------
 \section{Applications}
%----------------------------------------------------------
% 
% \Marcello{[Should we leave applications? Now that we know that our result is weaker than Otwinowska's one, this is no longer necessary.... what do you think?]}

%\Marcello{Let us finally record that one can use Vial's work \cite{Vial} on intermediate algebraic Jacobians and motives to get the following applications.}
Our first application, obtained by combining Theorems~\ref{thm:main} and \ref{thm:main2} with Vial's work \cite{Vial} on intermediate algebraic Jacobians, is the following:
\begin{corollary}\label{thm:application1}
Let $X$ be a complete intersection of two odd-dimensional quadrics. In this case we have the following motivic decomposition
\begin{equation}\label{eq:decomp11}
M^i(X)_\bbQ \simeq \left\{ \begin{array}{lcl}
         {\bf L}^{\otimes \frac{i}{2}} & \mbox{if} & 0 \leq i \leq 2d,\,\, i \neq d,\,\, i\, \mbox{even}\\
        ({\bf L}^{\otimes \frac{d}{2}})^{\oplus (d+4)} & \mbox{if}   & i=d\\
        0 & & \mbox{otherwise,}
        \end{array} \right.\,
\end{equation}
where $d:=\mathrm{dim}(X)$ and ${\bf L}$ is the rational Lefschetz motive. 

Let $X$ be a complete intersection of either two even-dimensional quadrics or three odd-dimensional quadrics. In these cases we have the motivic decomposition
\begin{eqnarray}\label{eq:decomp22}
M^i(X)_\bbQ \simeq \left\{ \begin{array}{lcl}
         {\bf L}^{\otimes \frac{i}{2}} & \mbox{if} & 0 \leq i \leq 2d, \,\, i\, \mbox{even}\\
       M^1(J^{\kappa}_a(X))_\bbQ(\kappa-d) & \mbox{if}   & i=d\\
        0 & & \mbox{otherwise.}
        \end{array} \right.\,
\end{eqnarray}
Moreover, in all the above cases the rational Chow motive $M(X)_\bbQ$ is Kimura-finite.
\end{corollary}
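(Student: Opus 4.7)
The plan is to feed the outputs of Theorems~\ref{thm:main} and~\ref{thm:main2} into the motivic decomposition results of Vial~\cite{Vial}, which manufacture an explicit Chow--K\"unneth decomposition out of the ``triviality'' of the low-dimensional rational Chow groups and the ``smallness'' of the algebraic intermediate Jacobians.

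Concretely, Theorem~\ref{thm:main} supplies $CH_i(X)_\bbQ \simeq \bbQ$ for every $i < \kappa$, which is precisely Vial's representability hypothesis for the low-dimensional Chow groups; Theorem~\ref{thm:main2} asserts that the only possibly non-trivial algebraic intermediate Jacobian of $X$ is $J^{\kappa}_a(X)$. Feeding these inputs into Vial's theorem yields a decomposition $M(X)_\bbQ = \bigoplus_i M^i(X)_\bbQ$ in which every piece $M^i(X)_\bbQ$ with $i \neq d := \mathrm{dim}(X)$ is either a Lefschetz motive ${\bf L}^{\otimes j}$ in even cohomological degree $i = 2j$ (with multiplicity $b_{2j}(X) = 1$ supplied by Lefschetz hyperplane on $\bbP^n$), or zero in odd cohomological degree (since the corresponding algebraic Jacobian vanishes by Theorem~\ref{thm:main2}). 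The middle piece $M^d(X)_\bbQ$ is then determined by the sub-case: in case~(i), where $d$ is even and $J^{\kappa}_a(X) = 0$, it is a sum of copies of ${\bf L}^{\otimes d/2}$; in case~(ii), where $d$ is odd, it acquires the transcendental shape $M^1(J^{\kappa}_a(X))_\bbQ(\kappa - d)$.

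The only multiplicity not pinned down by Lefschetz hyperplane is the number of copies of ${\bf L}^{\otimes d/2}$ in $M^d(X)_\bbQ$ in case~(i), which equals $b_d(X)$. This is computed by an Euler-characteristic argument: using $c(T_X) = (1+H)^{d+3}/(1+2H)^2|_X$ together with $\int_X H^d = 4$, one obtains $\chi(X) = 2(d+2)$; since the even Betti numbers of $X$ outside the middle contribute exactly $d$ to $\chi(X)$, one concludes $b_d(X) = d+4$, matching the claim.

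Finally, Kimura-finiteness of $M(X)_\bbQ$ is formal: the Lefschetz summands are evenly finite-dimensional, the motive $M^1(J^{\kappa}_a(X))_\bbQ$ of an abelian variety is oddly finite-dimensional, and a direct sum of Kimura-finite motives is Kimura-finite. The main technical obstacle is the clean matching of Vial's representability and Jacobian hypotheses with the conclusions of Theorems~\ref{thm:main}--\ref{thm:main2}, and in particular the identification of the middle piece in case~(ii) with the $M^1$-piece of the motive of the algebraic intermediate Jacobian; once this translation is in place, the decomposition follows directly from the results of \cite{Vial}.
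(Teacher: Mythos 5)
Your overall strategy --- feeding Theorems~\ref{thm:main} and~\ref{thm:main2} into Vial's decomposition theorem (Theorem~\ref{thm:Vial2}, i.e.\ \cite[Theorem~4]{Vial}) --- is exactly the paper's strategy for the two-even-dimensional and three-odd-dimensional cases, and your treatment of the two-odd-dimensional case is a legitimate variant: the paper instead reads \eqref{eq:decomp11} directly off the isomorphism $M(X)_\bbQ \simeq {\bf L}^{\otimes l_1}\oplus\cdots\oplus{\bf L}^{\otimes l_{4m}}$ already obtained from noncommutative motives in Proposition~\ref{prop:2-odd-quadrics-have-finite-chow}, and gets the middle multiplicity $d+4=2m+2$ from Reid's Betti-number computation \eqref{eq:Reid-computation} rather than from your (correct) Euler-characteristic calculation. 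Your Kimura-finiteness argument is the intended one.

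The genuine gap is the hypothesis of Vial's theorem in the odd-dimensional cases: rational representability of \emph{all} the groups $A_i(X_\bbC)_\bbQ$, and in particular of the middle one $A_{\kappa}(X)_\bbQ$. This does not follow from the statements of Theorems~\ref{thm:main} and~\ref{thm:main2}: Theorem~\ref{thm:main} controls $CH_i$ only for $i<\kappa$, and the vanishing of $J^i_a(X)$ for $i\neq\kappa$ produces no surjection from the $A_0$ of a curve onto $A_{\kappa}(X)_\bbQ$. You flag this as ``the main technical obstacle \ldots once this translation is in place,'' but the translation is precisely where a new input is required, and it is not a formality: the paper supplies it via the algebraic isomorphism $A_{\kappa}(X)_\bbQ\simeq A_0(C)_\bbQ$ of Reid \cite[Theorem~4.14]{reid:thesis} in the two-even case, and via the algebraic surjection $A_0(\widetilde{C})\twoheadrightarrow A_{\kappa}(X)$ of Beauville \cite[Theorems~3.1 and 6.3]{beauvilleprym} in the three-odd case --- that is, exactly the correspondences with curves produced by the HPD analysis in the proofs of Propositions~\ref{prop:new} and~\ref{prop:Jac2}. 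By contrast, the identification of the odd middle piece with $M^1(J^{\kappa}_a(X))_\bbQ(\kappa-d)$, which you single out as a difficulty, is part of the \emph{conclusion} of Vial's theorem and costs nothing once representability is in hand. Finally, the claim that the off-middle multiplicities equal $1$ is justified in the paper by the refined output of the proof of Theorem~\ref{thm:main} (namely $CH_i(X)_\bbQ\simeq\bbQ$ for all $i\neq\kappa$, not just $i<\kappa$, together with $CH_{\kappa}(X)_\bbQ/A_{\kappa}(X)_\bbQ\simeq\bbQ$); your appeal to weak Lefschetz for the Betti numbers is an acceptable substitute.
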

Intuitively speaking, Corollary~\ref{thm:application1} is the ``motivic refinement'' of Theorems~\ref{thm:main} and
\ref{thm:main2}. 
Our second application, obtained by combining Theorem~\ref{thm:main} with Vial's work \cite{vial-fibrations} on fibrations, is the following:
\begin{corollary}\label{thm:application2}
Let $f : Y \to B$ be a smooth dominant flat morphism between smooth projective
$k$-schemes. Assume that the fibers of $f$ are either complete intersections of two quadrics or complete intersections
of three odd-dimensional quadrics. Under these assumptions, the following holds:
\begin{itemize}
\item[(i)] When $\mbox{dim}(B) \leq 1$, the rational Chow motive $M(Y)_{\bbQ}$ is Kimura-finite and $Y$ satisfies Murre's conjectures;
\item[(ii)] When $\mbox{dim}(B) \leq 2$, $Y$ satisfies Grothendieck's standard conjectures;
\item[(iii)] When $\mbox{dim}(B) \leq 3$, $Y$ satisfies Hodge's conjecture. 
\end{itemize}
\end{corollary}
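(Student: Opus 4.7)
The plan is to reduce the corollary to a direct application of the main theorems of \cite{vial-fibrations}, using Corollary \ref{thm:application1} as the key input on the structure of the fiber motives.

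First I would observe that, since $f : Y \to B$ is smooth and the fibers are flat and projective, every geometric fiber $X_b := f^{-1}(b)$ is a smooth projective complete intersection of the desired type. Applying Corollary \ref{thm:application1} fiberwise, each $M(X_b)_{\bbQ}$ admits a decomposition whose summands are either Lefschetz motives ${\bf L}^{\otimes j}$ or a single ``middle'' piece of the form $M^1(J^\kappa_a(X_b))_{\bbQ}(\kappa-d)$ (respectively $({\bf L}^{\otimes d/2})^{\oplus(d+4)}$ in the two odd-dimensional quadrics case). In particular, every fiber has a rational Chow motive of \emph{niveau $\leq 1$}, \ie, a finite direct sum of Tate twists of $\mathbf{1}$ and of $h^1$ of an abelian variety, and is Kimura-finite.

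Next I would feed this structural input into Vial's machinery from \cite{vial-fibrations}. Vial proves that for a smooth projective morphism $f : Y \to B$ between smooth projective $k$-schemes, if the Chow motives of the fibers are of niveau $\leq 1$ (in the sense above), then the total space $Y$ inherits strong motivic properties in terms of $\dim(B)$: Kimura-finiteness of $M(Y)_{\bbQ}$ and Murre's conjecture when $\dim(B) \leq 1$, Grothendieck's standard conjectures when $\dim(B) \leq 2$, and Hodge's conjecture when $\dim(B) \leq 3$. Thus each of the three assertions (i)--(iii) will follow formally once the ``niveau $\leq 1$'' hypothesis is verified, which is exactly what Corollary \ref{thm:application1} provides.

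The main obstacle I anticipate is checking the precise form in which Vial requires the fiberwise decomposition: in some of his statements, one needs that the motivic decomposition of the fibers is induced by a \emph{relative} Chow-K\"unneth decomposition, or at least that the middle piece varies in a family of abelian varieties over $B$. In the two-quadric case this relative structure is given by the hyperelliptic curve family $\mathcal{C} \to B$ associated to the pencil of quadrics (so that $J^\kappa_a(X_b) = J(\mathcal{C}_b)$), and in the three odd-dimensional quadrics case by the corresponding family of Prym varieties $\mathrm{Prym}(\widetilde{\mathcal{C}}/\mathcal{C}) \to B$; both are algebraic over $B$ by the constructions of Reid and Beauville recalled in the introduction. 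Once this is spelled out, the hypotheses of Vial's theorems apply verbatim and the three claims follow.
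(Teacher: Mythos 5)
Your overall strategy---feed structural information about the fibers into Vial's fibration machinery from \cite{vial-fibrations}---is the same as the paper's, but the central step of your argument rests on a theorem that does not exist in the form you cite, and this is a genuine gap. Vial does not prove a statement of the shape ``if the Chow motives of the fibers have niveau $\leq 1$, then $Y$ satisfies Murre/standard/Hodge according to $\dim(B)$.'' What he proves, and what the paper actually uses, is a two-step chain with different hypotheses: first, \cite[Theorem~6.10]{vial-fibrations} says that if $CH_i(Y_b)_\bbQ \simeq \bbQ$ for all $i<l$ and all closed points $b \in B$, then the low-degree Chow groups $CH_i(Y)_\bbQ$ of the \emph{total space} have niveau $\leq \dim(B)$ for all $i<l$; second, \cite[Theorem~7.1]{vial-fibrations} converts such niveau bounds on $CH_0(Y_\bbC),\ldots,CH_l(Y_\bbC)$ into Kimura-finiteness, Murre, the Lefschetz standard conjecture, and Hodge, according to the value of the niveau and of the range $l$. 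The input required on the fibers is therefore only the triviality of their low-degree Chow groups, that is, Theorem \ref{thm:main}, not the full motivic decomposition of Corollary \ref{thm:application1}; and the conclusion is reached via niveau bounds on the cycles of $Y$, not via any motivic property of the fibers being ``inherited'' by $Y$. As written, with the key citation unidentified and its hypotheses unverified, your proof cannot be checked.

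Two further remarks. First, the obstacle you anticipate---the need for a relative Chow--K\"unneth decomposition, or for the families $\mathcal{C}\to B$ and $\mathrm{Prym}(\widetilde{\mathcal{C}}/\mathcal{C})\to B$ to vary algebraically over the base---is a red herring: Theorems 6.10 and 7.1 of \cite{vial-fibrations} are stated fiberwise and require no relative structure whatsoever, so none of that needs to be verified. Second, your proposal is repairable with little effort: the decompositions \eqref{eq:decomp11}--\eqref{eq:decomp22} do imply $CH_i(Y_b)_\bbQ\simeq\bbQ$ for $i<\kappa$, so the input you extract from Corollary \ref{thm:application1} is (more than) sufficient once it is routed through the correct pair of theorems; but invoking the full motivic decomposition of the fibers is unnecessary overhead compared with the paper's direct use of Theorem \ref{thm:main}.
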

% \begin{remark}
% If instead of Theorem \ref{thm:main}, and Theorem \ref{thm:main2}, one uses Otwinowska's results \cite{otwinowska} on complete intersections of \Marcello{more} quadrics, one obtains
% (weaker) generalizations of the above Corollaries \ref{thm:application1} and \ref{thm:application2}.
% \end{remark}
%
%\Marcello{We notice that (weaker) generalizations of Corollaries \ref{thm:application1} and \ref{thm:application2} can be obtained in the cases of any
%complete intersection of quadrics, combining Otwinowska's \cite{otwinowska} and Vial's \cite{vial-fibrations} results.}
%--------------------------------------------------------------------------
\subsection*{Notations:} 
%--------------------------------------------------------------------------
Throughout the article we will work over an algebraically closed field $k \subset \bbC$. All $k$-schemes will be smooth and projective.
Given a $k$-scheme $X$, we will write $CH_i(X)_\bbZ$ for the Chow group of $i$-dimensional cycles modulo rational equivalence and $A_i(X)_\bbZ$
for the subgroup of algebraically trivial cycles.  We will also use the codimensional notations $CH^i(X)_\bbZ$ and $A^i(X)_\bbZ$ and rational
coefficients $CH_i(X)_\bbQ, A_i(X)_\bbQ, CH^i(X)_\bbQ, A^i(X)_\bbQ$. Finally, the derived category of perfect complexes of $X$ will be denoted
by $\perf(X)$. Note that the canonical inclusion of categories $\perf(X) \hookrightarrow \cD^b(\mathrm{Coh}(X))$ is an equivalence since $X$ is smooth.
%--------------------------------------------------------------------------
\subsection*{Acknowledgments:} 
%--------------------------------------------------------------------------
The authors are very grateful to H{\'e}l{\`e}ne Esnault for precious advices,
and to Charles Vial and Asher Auel for useful comments and answers. 
% They are also grateful
% to the referee for his/her comments, for providing a ``geometric'' proof of Theorem
% \ref{thm:main}(i), and also for pointing out a mistake in a previous version.
% whose correction lead us to a proof working without genericity assumptions}.
%\Marcello{The authors thanks the anonymous referee for having found
%a mistake in an earlier version, and are} very grateful to H{\'e}l{\`e}ne Esnault for precious advices,
%and to Charles Vial and Asher Auel for
%useful comments and answers.
%---------------------------------------------------------------------------------------------------
\section{Preliminaries}\label{sec:preliminaries}
%---------------------------------------------------------------------------------------------------
%---------------------------------------------------------------------------------------------------
\subsection*{Differential graded categories}
%---------------------------------------------------------------------------------------------------
We will assume that the reader is familiar with the language of differential graded (=dg) categories. The standard reference is Keller's ICM survey \cite{ICM-Keller}. Every (dg) $k$-algebra $A$ gives naturally rise to a dg category $\underline{A}$ with a single object and (dg) $k$-algebra of endomorphisms $A$. Another source of examples is provided by $k$-schemes since, as proved by Lunts-Orlov in \cite[Theorem~2.12]{LO}, the derived category of perfect complexes $\perf(X)$ admits a unique dg enhancement $\perf^\dg(X)$. Recall from Kontsevich \cite{IAS,Miami,finMot} that a dg category $\cA$ is called {\em smooth} if it is perfect as a bimodule over itself and {\em proper} if for each ordered pair of objects $(x,y)$ we have $\sum_i \mathrm{dim}\, H^i \cA(x,y)< \infty$. The main examples are the dg categories $\perf^\dg(X)$.%---------------------------------------------------------------------------------------------------
\subsection*{Noncommutative Chow motives}
%---------------------------------------------------------------------------------------------------
Recall from the survey article \cite{survey} the construction of the category $\NChow(k)_\bbQ$ of noncommutative Chow motives (with rational coefficients) and of the $\otimes$-functor
$$ U(-)_\bbQ:\spdgcat(k) \too \NChow(k)_\bbQ\,,$$
where $\spdgcat(k)$ stands for the category of smooth proper dg categories. Given a $k$-scheme $X$, we will write $NC(X)_\bbQ$ instead of $U(\perf^\dg(X))_\bbQ$.
\begin{proposition}\label{prop:semi}
Every semi-orthogonal decomposition $\perf(X)=\langle \cT_1, \ldots, \cT_r\rangle$ gives rise to a direct sum decomposition $NC(X)_\bbQ \simeq U(\cT_1^\dg )_\bbQ \oplus \cdots \oplus U(\cT_r^\dg)_\bbQ$, where $\cT_i^\dg$ stands for the dg enhancement of $\cT_i$ induced from $\perf^\dg(X)$. 
\end{proposition}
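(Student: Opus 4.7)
The plan is to reduce by induction to the case of a two-term decomposition, and then to recognise such a decomposition as yielding a split short exact sequence of smooth proper dg categories, to which the additive invariance built into $\NChow(k)_\bbQ$ applies. The induction step is straightforward: given $\perf(X)=\langle \cT_1,\ldots,\cT_r\rangle$, one regroups it as $\langle \cT_1,\langle \cT_2,\ldots,\cT_r\rangle\rangle$, with the second bracket an admissible subcategory carrying its own $(r-1)$-term semi-orthogonal decomposition; iterating reduces the claim to $r=2$.

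For $r=2$, I would lift the admissible inclusion to the dg level. Since $\perf^\dg(X)$ is the essentially unique dg enhancement of $\perf(X)$ by Lunts--Orlov \cite[Theorem~2.12]{LO}, each $\cT_i$ inherits a canonical full dg subcategory $\cT_i^\dg \subseteq \perf^\dg(X)$, and both smoothness and properness are inherited. The admissibility of $\cT_1$, together with the semi-orthogonality, should then identify the Drinfeld dg quotient $\perf^\dg(X)/\cT_1^\dg$ with $\cT_2^\dg$ up to Morita equivalence, giving a short exact sequence $\cT_1^\dg \to \perf^\dg(X) \to \cT_2^\dg$ of smooth proper dg categories; the existence of both a left and a right adjoint to each admissible inclusion furnishes a dg splitting.

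Finally, the functor $U(-)_\bbQ\colon \spdgcat(k) \to \NChow(k)_\bbQ$ is by construction an additive invariant (see the survey \cite{survey}), so it carries split short exact sequences $\cA \to \cB \to \cC$ of smooth proper dg categories to direct sum decompositions $U(\cB)_\bbQ \simeq U(\cA)_\bbQ \oplus U(\cC)_\bbQ$. Applying this to the sequence above and iterating along the induction yields the desired decomposition $NC(X)_\bbQ \simeq U(\cT_1^\dg)_\bbQ \oplus \cdots \oplus U(\cT_r^\dg)_\bbQ$. The delicate point I expect is the Morita identification of $\perf^\dg(X)/\cT_1^\dg$ with $\cT_2^\dg$, together with the verification that the resulting sequence is split in the homotopical $\Hmo$-sense actually required by additivity; this is precisely where the full strength of admissibility, rather than mere semi-orthogonality of the generating objects, has to be used.
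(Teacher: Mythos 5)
Your proposal is correct and follows essentially the same route as the paper: reduce by recursion to a length-two decomposition, then invoke the additivity of $U(-)_\bbQ$ on the resulting split short exact sequence of smooth proper dg categories. The paper simply compresses the $r=2$ step into a citation of \cite[Theorem~6.3]{IMRN}, whereas you spell out the underlying mechanism (the Morita identification of the Drinfeld quotient with $\cT_2^\dg$ and the splitting coming from admissibility), which is exactly the content of that reference.
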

\begin{proof}
By construction of the functor $U(-)_\bbQ$, every semi-orthogonal decomposition of $ \perf(X)=\langle \cT,\cT^\perp\rangle$ of length $2$ gives rise to a direct sum decomposition $NC(X)_\bbQ \simeq U(\cT^\dg)_\bbQ \oplus U(\cT^{\perp,\dg})_\bbQ$; consult \cite[Theorem~6.3]{IMRN} for details. The proof follows now from a recursive argument applied to the semi-orthogonal decompositions $\langle \langle \cT_i \rangle, \langle \cT_{i+1}, \ldots, \cT_r \rangle \rangle, 1 \leq i \leq r-1$.
\end{proof}
%---------------------------------------------------------------------------------------------------
\section{Jacobians of noncommutative Chow motives}\label{sec:Jacobians}
%---------------------------------------------------------------------------------------------------
Recall from Andr{\'e} \cite[\S4]{Andre} the construction of the category $\Chow(k)_\bbQ$ of Chow motives (with rational coefficients) and of the contravariant $\otimes$-functor
$$ M(-)_\bbQ:\SmProj(k)^\op \too \Chow(k)_\bbQ\,,$$
where $\SmProj(k)$ stands for the category of smooth projective $k$-schemes. De Rham cohomology factors through Chow motives. Hence, given an irreducible $k$-scheme $X$ of dimension $d$, one defines
\begin{eqnarray}\label{eq:pairings1}
NH_{dR}^{2i+1}(X) := \sum_{C, \gamma_i} \mathrm{Im}(H^1_{dR}(C) \stackrel{H^1_{dR}(\gamma_i)}{\too} H_{dR}^{2i+1}(X)) && 0 \leq i \leq d-1\,,
\end{eqnarray}
where $C$ is a curve and $\gamma_i:M(C)_\bbQ \to M(X)_\bbQ(i)$ a morphism in $\Chow(k)_\bbQ$. Intuitively speaking, \eqref{eq:pairings1} are the odd pieces of de Rham cohomology that are generated by curves. By restricting the classical intersection bilinear pairings on de Rham cohomology (see \cite[\S3.3]{Andre}) to these pieces one then obtains
\begin{eqnarray}\label{eq:pairings2}
\langle-,-\rangle: NH_{dR}^{2d-2i-1}(X) \times NH_{dR}^{2i+1}(X) \to k && 0 \leq i \leq d-1\,.
\end{eqnarray}
Now, recall from \cite[Theorem~1.3]{MT} the construction of the Jacobian functor
$$ {\bf J}(-):\NChow(k)_\bbQ \too \mathrm{Ab}(k)_\bbQ$$
with values in the category of abelian varieties up to isogeny. As proved in \cite[Theorem~1.7]{MT}, whenever the above pairings \eqref{eq:pairings2} are non-degenerate, one has an isomorphism $ {\bf J}(\perf^\dg(X)) \simeq \prod_{i=0}^{d-1} J^i_a(X)$ in $\mathrm{Ab}(k)_\bbQ$. As explained in {\em loc. cit.}, this is always the case for $i=0$ and $i=d-1$ and all the remaining cases follow from Grothendieck's standard conjecture of Lefschetz type.
%---------------------------------------------------------------------------------------------------
\subsection*{Categorical data}\label{sub:categorical}
%---------------------------------------------------------------------------------------------------
Let $X$ and $Y$ be two irreducible $k$-schemes of dimensions $d_X$ and $d_Y$, respectively. Assume that they are related by the categorical data:

\vspace{0.1cm}

{\it $(\star)$ There exist semi-orthogonal decompositions $\perf(X) = \langle \cT_X, \cT_X^\perp\rangle$ and $\perf(Y)=\langle \cT_Y, \cT_Y^\perp\rangle$ and an equivalence $\phi: \cT_X \stackrel{\sim}{\to} \cT_Y$ of triangulated categories.}

\vspace{0.1cm}

Let $\Phi$ be the composition $ \perf(X) \to \cT_X \stackrel{\phi}{\simeq} \cT_Y \hookrightarrow \perf(Y)$.
\begin{theorem}{(\cite[Theorem~2.2]{marcell-goncalo-2})}\label{thm:Jac1} 
Assume that the bilinear pairings \eqref{eq:pairings2} (associated to $X$ and $Y$) are non-degenerate and that $\Phi$ is of Fourier-Mukai type.
\begin{itemize}
\item [(i)] Under the above assumptions, one obtains a well-defined morphism
$\tau: \prod_{i=0}^{d_X-1} J^i_a(X) \to \prod_{i=0}^{d_Y-1} J^i_a(Y)$ in $\Ab(k)_\bbQ$. 

\item[(ii)] Assume moreover that ${\bf J}(U(\cT_X^{\perp,\dg})_\bbQ)=0$. This holds for instance whenever $\cT_X^\perp$ admits a full exceptional collection. Under this extra assumption, the morphism $\tau$ is split injective.

\item[(iii)] Assume furthermore that ${\bf J}(U(\cT_Y^{\perp,\dg})_\bbQ)=0$. Under this extra assumption, the morphism $\tau$ becomes an isomorphism.
\end{itemize} 
\end{theorem}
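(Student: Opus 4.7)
The plan is to assemble three ingredients already set up in the paper: Proposition~\ref{prop:semi}, which converts any semi-orthogonal decomposition into a direct sum decomposition in $\NChow(k)_\bbQ$; the Jacobian functor ${\bf J}(-)\colon \NChow(k)_\bbQ \to \Ab(k)_\bbQ$ recalled at the start of this section; and the identification ${\bf J}(\perf^\dg(X)) \simeq \prod_{i=0}^{d_X-1} J^i_a(X)$, which holds precisely because of the non-degeneracy hypothesis on the pairings~\eqref{eq:pairings2}. The strategy is to transport $\phi$ through the functor $U(-)_\bbQ$, apply ${\bf J}$, and then read off the induced morphism between the algebraic Jacobian factors.

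For part~(i), the first task is to promote $\phi\colon \cT_X \isoto \cT_Y$ from a triangulated equivalence to a quasi-equivalence of dg enhancements $\cT_X^\dg \simeq \cT_Y^\dg$. This is exactly what the Fourier-Mukai hypothesis buys: a Fourier-Mukai kernel naturally produces a dg-functor between the canonical enhancements inherited from $\perf^\dg(X)$ and $\perf^\dg(Y)$, and it is a quasi-equivalence by the Lunts-Orlov uniqueness result already cited. Applying $U(-)_\bbQ$ then yields an isomorphism $U(\cT_X^\dg)_\bbQ \simeq U(\cT_Y^\dg)_\bbQ$ in $\NChow(k)_\bbQ$. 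Combined with the decompositions of Proposition~\ref{prop:semi} and the functor ${\bf J}$, one defines $\tau$ as the composite
$$\prod_{i=0}^{d_X-1} J^i_a(X) \simeq {\bf J}(NC(X)_\bbQ) \twoheadrightarrow {\bf J}(U(\cT_X^\dg)_\bbQ) \isoto {\bf J}(U(\cT_Y^\dg)_\bbQ) \hookrightarrow {\bf J}(NC(Y)_\bbQ) \simeq \prod_{i=0}^{d_Y-1} J^i_a(Y),$$
where the outer isomorphisms come from the non-degeneracy of the pairings, while the projection and inclusion come from the summand decomposition ${\bf J}(NC(X)_\bbQ) \simeq {\bf J}(U(\cT_X^\dg)_\bbQ) \oplus {\bf J}(U(\cT_X^{\perp,\dg})_\bbQ)$ and its analogue for $Y$.

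For part~(ii), the vanishing ${\bf J}(U(\cT_X^{\perp,\dg})_\bbQ)=0$ collapses the first direct sum, so the initial projection in the above composite becomes an isomorphism; consequently $\tau$ is the composite of two isomorphisms with a split monomorphism and is itself split injective. The parenthetical claim that a full exceptional collection in $\cT_X^\perp$ suffices follows by iterating Proposition~\ref{prop:semi}: each exceptional object contributes a copy of $U(\underline{k})_\bbQ$, and the tensor unit of $\NChow(k)_\bbQ$ carries no abelian-variety information, so ${\bf J}$ of the whole direct sum vanishes. For part~(iii), the symmetric vanishing on the $Y$ side promotes the final inclusion to an isomorphism, and $\tau$ itself becomes an isomorphism.

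The main obstacle, and the reason the Fourier-Mukai hypothesis is indispensable, is the lifting step in part~(i): the functor $U(-)_\bbQ$ is defined on the category of smooth proper dg categories, and a priori a mere triangulated equivalence $\phi$ need not arise from a dg-functor, so without the Fourier-Mukai assumption one cannot turn $\phi$ into a morphism in $\NChow(k)_\bbQ$ at all. Once this lifting is in place the non-degeneracy hypothesis on~\eqref{eq:pairings2} is only needed to re-express ${\bf J}(NC(-)_\bbQ)$ as the classical product of algebraic Jacobians, and the remainder of the argument is formal bookkeeping with the direct sum decompositions supplied by Proposition~\ref{prop:semi}.
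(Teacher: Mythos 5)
The paper does not actually prove Theorem~\ref{thm:Jac1}: it is imported verbatim from \cite[Theorem~2.2]{marcell-goncalo-2}, with only a remark that the proof there (given for $k=\bbC$) carries over to any algebraically closed $k\subseteq\bbC$. Your argument is correct and is essentially the strategy of the cited reference --- decompose $NC(X)_\bbQ$ and $NC(Y)_\bbQ$ via Proposition~\ref{prop:semi}, use the Fourier--Mukai hypothesis to obtain the isomorphism $U(\cT_X^\dg)_\bbQ\simeq U(\cT_Y^\dg)_\bbQ$, apply ${\bf J}(-)$, and identify the outer terms with products of algebraic Jacobians via the non-degeneracy of \eqref{eq:pairings2}; the bookkeeping for (ii) and (iii) is exactly as you describe. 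One imprecision worth fixing: the Fourier--Mukai kernel is used to produce a perfect bimodule (a class in $K_0$ of $\perf(X\times Y)$) inducing a morphism $NC(X)_\bbQ\to NC(Y)_\bbQ$ in $\NChow(k)_\bbQ$ directly; this is not an application of the Lunts--Orlov uniqueness theorem, which concerns enhancements of $\perf(X)$ itself and does not lift an arbitrary triangulated equivalence to a dg functor.
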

\begin{remark}
Theorem~\ref{thm:Jac1} was proved in {\em loc. cit.} in the case where $k=\bbC$. The same proof works {\em mutatis mutandis} for every algebraically closed field $k \subseteq \bbC$.
\end{remark}
%---------------------------------------------------------------------------------------------------
\section{Quadric fibrations and homological projective duality}\label{sec:HPD}
%---------------------------------------------------------------------------------------------------
%---------------------------------------------------------------------------------------------------
\subsection*{Quadric fibrations}
%---------------------------------------------------------------------------------------------------
Let $q:Q \to S$ be a flat quadric fibration of relative dimension $n-1$, $\cC_0$ the sheaf of
even parts of the associated Clifford algebra, and $\perf(S,\cC_0)$ the derived category of perfect complexes over $\cC_0$. 
As proved by Kuznetsov in \cite[Theorem 4.2]{kuznetquadrics}, one has the following semi-orthogonal decomposition
\begin{equation}\label{eq:semiort-for-quad-fib}
\perf(Q) = \langle \perf(S,\cC_0), \perf(S)_1, \ldots, \perf(S)_{n-1} \rangle\,,
\end{equation}
where $\perf(S)_i := q^* \perf(S) \otimes \cO_{Q/S}(i)$. Note that $\perf(S)_i \simeq \perf(S)$
for every $i$. 

In the case where the discriminant $\Delta$ of $Q$ is smooth, the category $\perf(S,\cC_0)$ admits a more geometric description. Concretely, when $n$ is odd (resp. even),
we have a double cover $\widetilde{S} \to S$ ramified along $\Delta$ (resp. a
root stack $\widehat{S}$ with a $\bbZ/2\bbZ$-action along $\Delta$), $\cC_0$ lifts to
an Azumaya algebra with class $\alpha$ in $\mathrm{Br}(\widetilde{S})$ (resp. in 
$\mathrm{Br}(\widehat{S})$), and $\perf(S,\cC_0) \simeq \perf(\widetilde{S},\alpha)$ (resp. $\perf(S,\cC_0) \simeq
\perf(\widehat{S},\alpha)$); consult Kuznetsov \cite[\S 3]{kuznetquadrics} for further details.
%---------------------------------------------------------------------------------------------------
% \subsection*{Reduction by hyperbolic splitting}
% %---------------------------------------------------------------------------------------------------
% Let $q:Q \to S$ be a flat quadric fibration of relative dimension $n-1 \geq 2$. Suppose that $q$ has a regular section\footnote{Under genericity assumptions it suffices the section to be rational; see \cite[Lemma~ 1.8]{auel-bernar-bologn}.}. Using such a section, one can perform {\em reduction by hyperbolic splitting} in order to obtain a quadric fibration
% $q': Q' \to S$ of relative dimension $n-3$; consult \cite[\S 1.3-1.4]{auel-bernar-bologn} for details. The sheaf $\cC_0'$ of even parts of the associated Clifford algebra is {\em Morita-equivalent} to $\cC_0$, \ie  $\perf(S,\cC_0) \simeq \perf(S,\cC'_0)$; see \cite[Corollary~1.27]{auel-bernar-bologn}.
% 
% Whenever $n-3 \geq 2$ and $q':Q' \to S$ has a regular section, one can perform again reduction by hyperbolic splitting.
% Iterating this process whenever possible one ends up with a quadric fibration, which by an abuse of notation we will still
% denote by $q':Q'\to S$. This latter fibration is of relative dimension $0$ or $1$ according to the parity of $n$ or otherwise doesn't have a regular section. As above, we have $\perf(S,\cC_0) \simeq \perf(S,\cC'_0)$.
% %---------------------------------------------------------------------------------------------------
\subsection*{Intersection of quadrics}
%---------------------------------------------------------------------------------------------------
Let $X$ be a smooth complete intersection of $r$ quadric hypersurfaces in $\bbP^n=\bbP(V)$. 
% Assume that $X$ is {\em generic} (see \cite[Definition 1.5]{auel-bernar-bologn}), \ie that there
% exist generically
% (semi-)regular primitive quadratic forms $q_i: k\dual \to S^2(V\dual), 1 \leq i \leq r$, such that:
% \begin{itemize}
% \item[(i)] the images $q_i(k) \subset S^2(V)$ span a subvector space
% $L \subset S^2(V)$ of rank $r$;
% \item[(ii)] the associated linear span quadric fibration $Q \to \bbP(L)$ has
% simple degeneration with regular discriminant divisor;
% \item[(iii)] the intersection $X$ of the corresponding quadric
% hypersurfaces is complete.
% \end{itemize}
% It follows easily from (i)-(iii) that genericity is a vacuous condition in the particular case $r=2$.
The linear span of these $r$ quadrics  gives rise to a hypersurface $Q \subset \bbP^{r-1} \times \bbP^n$,
and the projection into
the first factor to a flat quadric fibration $q: Q \to \bbP^{r-1}$ of relative dimension $n-1$; 
consult \cite[\S 5]{kuznetquadrics}
for details. As above, we can then consider the sheaf $\cC_0$ of even parts of the associated 
Clifford algebra and the derived
category $\perf(\bbP^{r-1},\cC_0)$ of perfect complexes.

Homological projective duality relates the structure
of $\perf(X)$ with the structure of $\perf(\bbP^{r-1},\cC_0)$. Concretely, 
when $2r < n+1$, one has a fully-faithful
functor $\perf(\bbP^{r-1}, \cC_0) \hookrightarrow \perf(X)$ and the following 
semi-orthogonal decomposition
\begin{equation}\label{eq:semiort-for-c-inters}
\perf(X) = \langle \perf(\bbP^{r-1},\cC_0), \cO_X(1), \ldots, \cO_X(n-2r+1) \rangle\,;
\end{equation}
see \cite[\S 5]{kuznetquadrics}. Here are some (low degree) examples:

% When $q: Q \to \bbP^{r-1}$ has moreover a regular section, one can replace $\perf(\bbP^{r-1},\cC_0)$
% in \eqref{eq:semiort-for-c-inters}
% by $\perf(\bbP^{r-1},\cC'_0)$, where $q': Q' \to \bbP^{r-1}$ is obtained by (iterated) hyperbolic 
% splitting as above.
%An more explicit geometric interpretation can be given in the following examples:
\begin{example}({\bf Two odd-dimensional quadrics})\label{ex:2}
Let $r=2$ and $n$ even, so that $q: Q \to \bbP^1$ has odd relative dimension $n-1$.
Thanks to the work of Kuznetsov \cite[Corollary 5.7]{kuznetquadrics}, 
$\perf(\bbP^1,\cC_0)$ is equivalent to the category $\perf(\widehat{\bbP^1})$ 
of the root stack with a $\bbZ/2\bbZ$ stacky structure on the critical points
of $Q \to \bbP^1$. Indeed, in this case $\cC_0$ lifts to a Morita-trivial
Azumaya algebra.
% Thanks to the work of Campana-Peternell-Pukhlikov
% \cite{campana_peternell_pukhlikov} and Graber-Harris-Starr \cite{graber_harris_starr},
% a quadric fibration over $\bbP^1$ has a regular section whenever its relative dimension is $\geq 2$. 
% As a consequence, we obtain a conic bundle $q':Q' \to \bbP^1$.
\end{example}
\begin{example}({\bf Two even-dimensional quadrics})\label{subsection:hyp-split-2-quad}
Let $r=2$ and $n$ odd, so that $q: Q \to \bbP^1$ has even relative dimension $n-1$. 
Let us denote by $C$ the hyperelliptic curve naturally
associated to $X$; see \cite[\S4]{reid:thesis}. Similarly to Example \ref{ex:2},
$\perf(\bbP^1,\cC_0)$ is equivalent to the category $\perf(C)$; see \cite[Theorem 2.9]{bondal-orlov}
and \cite[Corollary 5.7]{kuznetquadrics}.
% we obtain a quadric surface
% bundle $q':Q' \to \bbP^1$. Note that since the base is $\bbP^1$ and $k$ is algebraically closed, $k(\bbP^1)$ is a $C_1$ field. Tsen's theorem
% (see \cite[Part I]{lang-quasi-algebraic}) implies then that this quadric surface bundle $q'$ has a
% rational section. As a consequence, by performing hyperbolic reduction one more time, we end up with a double cover
% $q': Q' \to \bbP^1$ which is isomorphic to
\end{example}

\begin{example}({\bf Three odd-dimensional quadrics})\label{subsection:hyp-split-3-quad}
Let $r=3$ and $n$ even, so that $q:Q\to \bbP^2$ has odd relative dimension $n-1$.
Consider the discriminant divisor of the fibration $q$, which is a curve with at most nodal
singularities; see \cite[Proposition 1.2]{beauvilleprym}.
Let us write $C$ for the normalization of this divisor.
% Recall from \cite[Definition 1.5]{auel-bernar-bologn} that when $r=3$
% the genericity assumption
% is equivalent to the smoothness of the discriminant divisor, which in this case is a 
% smooth plane curve $C$. 
As explained in \cite[Proposition 1.5]{beauvilleprym},
$C$ comes equipped with an \'etale double covering $\widetilde{C}\to C$.
% Since $k$ is algebraically closed,
% Tsen's theorem (see \cite[Part I]{lang-quasi-algebraic})
% combined with the fact that $k(\bbP^2)$ is a $C_2$ field  imply that a quadric fibration $q: Q \to \bbP^2$
% has a rational section whenever its relative dimension is $\geq 2$, \ie when $n>3$. As a consequence, 
% by performing hyperbolic reduction we obtain a conic
% bundle $q':Q'  \to \bbP^2$ whose discriminant divisor (parameterizing degenerate conics) 
\end{example}

%---------------------------------------------------------------------------------------------------
\section{Proof of Theorem~\ref{thm:main} - case ($\mathrm{i}$)}\label{sec:categ-proof}
%---------------------------------------------------------------------------------------------------
Let $X$ be a complete intersection of two quadric hypersurfaces in $\bbP^n$ with $n \geq 3$.
The linear span of these two quadrics gives rise to a flat quadric fibration $q:Q \to \bbP^1$
of relative dimension $n-1$. As explained in \S\ref{sec:HPD}, we hence obtain the following
semi-orthogonal decomposition
\begin{eqnarray}\label{eq:decomp2}
\perf(X) & = & \langle \perf(\bbP^1, \cC_0), \cO_X(1), \ldots, \cO_X(n-3)\rangle\,.
\end{eqnarray}
%------------------------------------------------------------------------------
\subsection*{Two odd-dimensional quadrics}
%------------------------------------------------------------------------------
Let $n=2m$ for some integer $m \geq 2$. Since we are intersecting
two odd-dimensional quadrics, we have $\mathrm{dim}(X)=2m-2$ and $\kappa = m-1$. 
Hence, in order to prove Theorem \ref{thm:main}, we need to show that 
\begin{eqnarray}\label{eq:need}
CH_i(X)_\bbQ \simeq \bbQ &\mathrm{for} & i < m-1\,.
\end{eqnarray}
As mentioned in the above Example \ref{ex:2}, one has $\perf(\bbP^1, \cC_0) \simeq
\perf(\widehat{\bbP^1})$ where
$\widehat{\bbP^1}$ is the root stack associated to the quadric fibration $q: Q \to \bbP^1$.
Moreover, as proved by Polishchuk in \cite[Theorem 1.2]{polishchuk}, $\perf(\widehat{\bbP^1})$ admits a full exceptional collection of length $p+2$,
where $p$ stands for the number
of discriminant points. Thanks to Reid \cite[Proposition 2.1]{reid:thesis}, we have $p=n+1=2m+1$. Hence, using Proposition~\ref{prop:semi}
and the above semi-orthogonal decomposition \eqref{eq:decomp2}, we obtain 
\begin{equation}\label{eq:isos}
NC(X)_\bbQ \simeq  U(\perf(\bbP^1, \cC_0)^\dg)_\bbQ \oplus {\bf 1}_\bbQ^{\oplus 2m-3} \simeq  {\bf 1}_\bbQ^{\oplus 2m+3}
\oplus {\bf 1}_\bbQ^{\oplus 2m-3} \simeq  {\bf 1}_\bbQ^{\oplus 4m}\,,
\end{equation}
where ${\bf 1}_\bbQ:=U(\underline{k})$ stands for the $\otimes$-unit of $\NChow(k)_\bbQ$. 
\begin{proposition}\label{prop:2-odd-quadrics-have-finite-chow}
The Chow motive $M(X)_\bbQ$ is of $\bbQ$-Lefschetz type and $CH^\ast(X)_\bbQ$ is a finite $4m$-dimensional $\bbQ$-vector space\footnote{By
construction of the functor $U(-)$, the above isomorphisms \eqref{eq:isos} occur already in the category $\NChow(k)_\bbZ$ of noncommutative
Chow motives with integral coefficients. Hence, using \cite[Theorem~2.1]{marcell-goncalo-1} instead of \cite[Theorems~1.3 and 1.7]{MT1}, we conclude that Proposition \ref{prop:2-odd-quadrics-have-finite-chow} holds also with $\bbQ$ replaced by $\bbZ[(4m-4)!]$.}.
\end{proposition}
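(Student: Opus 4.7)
The plan is to bootstrap the noncommutative-motivic isomorphism \eqref{eq:isos} to a commutative-motivic decomposition of $M(X)_\bbQ$. First I would observe that the semi-orthogonal decomposition \eqref{eq:decomp2} is in fact a full exceptional collection on $\perf(X)$: the $2m-3$ line bundle summands $\cO_X(i)$ are exceptional, and Polishchuk's theorem \cite[Theorem 1.2]{polishchuk} provides a full exceptional collection on $\perf(\widehat{\bbP^1}) \simeq \perf(\bbP^1,\cC_0)$ of length $p+2=(2m+1)+2=2m+3$; concatenating yields a full exceptional collection of length $4m$ in $\perf(X)$.

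Next I would invoke the bridge between noncommutative and commutative Chow motives recorded in \cite[Theorem~2.1]{marcell-goncalo-1} (the reference already cited in the footnote): a full exceptional collection on a smooth projective $k$-scheme $X$ forces the rational Chow motive $M(X)_\bbQ$ to split as a direct sum of Lefschetz motives, one summand per exceptional object, with the Tate twist of each summand pinned down by the position of the corresponding object in the collection. Applied to our situation, this produces a decomposition $M(X)_\bbQ \simeq \bigoplus_{j=1}^{4m}\mathbf{L}^{\otimes a_j}$ for integers $a_j\in\{0,1,\ldots,2m-2\}$, so $M(X)_\bbQ$ is of $\bbQ$-Lefschetz type. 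The dimension count $\dim_\bbQ CH^\ast(X)_\bbQ=4m$ then falls out of the identity $\Hom_{\Chow(k)_\bbQ}(\mathbf{L}^{\otimes i},\mathbf{L}^{\otimes j})=\bbQ$ for $i=j$ and $0$ otherwise, each Lefschetz summand contributing exactly one to the total.

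The main obstacle is the passage from $\NChow$ to $\Chow$: the natural orbit-category embedding $\Chow(k)_\bbQ/(-\otimes\bbQ(1))\hookrightarrow\NChow(k)_\bbQ$ determines $M(X)_\bbQ$ only up to Tate twist, so the isomorphism \eqref{eq:isos} in isolation is not enough. What licenses the lift is that we have a genuine full exceptional collection, not a merely formal splitting of $NC(X)_\bbQ$ into units; this structural input is exactly what \cite[Theorem~2.1]{marcell-goncalo-1} exploits, and the integral refinement alluded to in the footnote follows from the same source.
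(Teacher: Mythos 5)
Your proof is correct and takes essentially the same route as the paper: both feed the length-$4m$ decomposition (the $2m-3$ line bundles together with Polishchuk's $2m+3$ exceptional objects on the root stack) into the Marcolli--Tabuada/Bernardara--Tabuada bridge theorem to obtain $M(X)_\bbQ \simeq {\bf L}^{\otimes l_1}\oplus\cdots\oplus{\bf L}^{\otimes l_{4m}}$ and then read off $\dim_\bbQ CH^\ast(X)_\bbQ = 4m$. One small correction to your closing discussion: the cited theorems require only that $NC(X)_\bbQ$ be of $\bbQ$-unit type, i.e.\ the formal splitting $NC(X)_\bbQ\simeq {\bf 1}_\bbQ^{\oplus 4m}$ of \eqref{eq:isos} already suffices to lift to a Lefschetz-type decomposition (and the twists $l_i$ are merely asserted to exist, not pinned down by the position in the collection), which is exactly the weaker input the paper's proof uses.
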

\begin{proof}
A noncommutative Chow motive is called of {\em $\bbQ$-unit type} if it is isomorphic in $\NChow(k)_\bbQ$ to
$\oplus_{i=1}^r {\bf 1}_\bbQ$ for some $r >0$. Thanks to the above isomorphisms \eqref{eq:isos},
$NC(X)_\bbQ$ is of $\bbQ$-unit type with $r=4m$. Hence, following \cite[Theorems~1.3 and 1.7]{MT}, the rational Chow motive
$M(X)_\bbQ$ is of $\bbQ$-Lefschetz type. Moreover, there exists a choice of integers $l_1, \ldots, l_{4m} \in
\{ 0, \ldots, 2m-2\}$ giving rise to an isomorphism 
\begin{equation}\label{eq:decomp}
M(X)_\bbQ \simeq {\bf L}^{\otimes l_1} \oplus \cdots \oplus {\bf L}^{\otimes l_{4m}}\,.
\end{equation}
Using \eqref{eq:decomp} we conclude automatically that
$CH^\ast(X)_\bbQ$ is a finite $4m$-dimensional $\bbQ$-vector space. This achieves the proof.
\end{proof}

% \begin{theorem}{(Vial \cite[Theorem~5]{Vial})}\label{thm:Vial1}
% Let $X$ be a $k$-scheme and $X_\bbC:=X\times_{\mathrm{Spec}(k)}\mathrm{Spec}(\bbC)$. Under these notations, the following statements are equivalent:
% \begin{itemize}
% \item[(i)] The rational cycle class map $cl: CH^i(X_\bbC)_\bbQ \to H^{2i}(X(\bbC),\bbQ)$ is surjective and the rational motive $M(X)_\bbQ$ is Kimura-finite;
% \item[(ii)] The rational cycle class map $cl: CH^i(X_\bbC)_\bbQ \to H^{2i}(X(\bbC),\bbQ)$ is injective;
% \item[(iii)] The Chow groups $CH_i(X_\bbC)_\bbQ$ are finite dimensional $\bbQ$-vector spaces.
% \end{itemize}
% \end{theorem}
Thanks to Proposition~\ref{prop:2-odd-quadrics-have-finite-chow}, $M(X)_\bbQ$
is of $\bbQ$-Lefschetz type. 
Consequently, since the identity of $M(X)_\bbQ$ decomposes into a sum of pairwise orthogonal
idempotents of Lefschetz type, the rational cycle class map is an isomorphism
\begin{equation}\label{eq:class}
cl: CH^i(X)_\bbQ \stackrel{\sim}{\too} H^{2i}(X(\bbC), \bbQ)\,.
\end{equation}
%\Marcello{Notice that, along the way, we established the following result of
%independent interst.}
%
%\begin{lemma}\label{lem:from-exc-to-class}
%\Marcello{Let $X$ be a smooth projective variety with a full exceptional collection. Then the rational 
%cycle class map $cl: CH^i(X_\bbC)_\bbQ \to H^{2i}(X(\bbC),\bbQ)$
%is bijective for any integer $i$}.
%\end{lemma}
%\begin{proof}
%\Marcello{Thanks to \cite[Thm 1.3]{MT1}, if $X$ has a full exceptional collection, then the Chow motive
%$h(X)_\bbQ$ is of Lefschetz type, which implies the bijectivity of the cycle class maps.}
%\end{proof}

Now, recall from Reid's thesis \cite[\S 0.7 and Corollary 3.15]{reid:thesis} that
\begin{equation}\label{eq:Reid-computation}
H^{2i}(X(\bbC),\bbQ) \simeq \left\{ \begin{array}{lcl}
         \bbQ & \mbox{if} & i\neq m-1\\
        \bbQ^{\oplus (2m+2)} & \mbox{if}   & i=m-1
        \end{array} \right.\,.
\end{equation}
By combining \eqref{eq:class}\text{-}\eqref{eq:Reid-computation}, we then obtain the following isomorphisms
\begin{equation}\label{eq:decomp-final}
CH_i(X)_\bbQ \simeq CH^{2m-2-i}(X)_{\bbQ} \simeq \left\{ \begin{array}{lcl}
         \bbQ & \mbox{if} & i\neq m-1\\
        \bbQ^{\oplus (2m+2)} & \mbox{if}   & i=m-1
        \end{array} \right.\,.
\end{equation}
This clearly implies \eqref{eq:need} and so the proof is finished.

\subsection*{Two even-dimensional quadrics}
Let $n=2m-1$ for some integer $m \geq 2$. Since we are intersecting two even-dimensional quadrics,
we have $\mathrm{dim}(X)=2m-3$ and $\kappa = m-2$. Hence, in order to prove Theorem \ref{thm:main}, we need to show that 
\begin{eqnarray}\label{eq:need2}
CH_i(X)_\bbQ \simeq \bbQ &\mathrm{for} & i < m-2\,.
\end{eqnarray}
Recall from Example \ref{subsection:hyp-split-2-quad} 
that the hyperelliptic curve naturally associated to $X$ is the discriminant double cover
$C \to \bbP^1$ and that we have an equivalence $\perf(\bbP^1, \cC_0) \simeq \perf(C)$.  
%the construction of the \Marcello{double cover $C \to \bbP^1$, that is, the hyperelliptic curve naturally associated to $X$, and the equivalence $\perf(\bbP^1, \cC'_0) \simeq \perf(C)$.}
% % and of the sheaf $\cC_0'$ of even parts of the associated Clifford algebra.
% % As observed in {\em loc. cit.}, $C$ is
% isomorphic to  This implies that\footnote{Equivalence  was proved differently by Kuznetsov in~\cite[Corollary 5.7]{kuznetquadrics}.} $\perf(\bbP^1, \cC'_0) \simeq \perf(C)$ since $\cC_0'$ lifts to an Azumaya algebra on $C$ and the Brauer group of $C$ is trivial.
The above semi-orthogonal
decomposition \eqref{eq:decomp2} reduces then to 
\begin{equation}\label{eq:semi-new}
\perf(X)= \langle \perf(C), \cO_X(1), \ldots, \cO_X(2m-4) \rangle\,.
\end{equation}
From \eqref{eq:semi-new} we conclude automatically that $K_0(X)_\bbQ \simeq K_0(C)_\bbQ \oplus \bbQ^{2m-4}$. Using the isomorphisms $K_0(X)_\bbQ \simeq CH^\ast(X)_\bbQ$ and
$K_0(C)_\bbQ \simeq CH^\ast(C)_\bbQ$ we obtain
\begin{equation}\label{eq:Chow-1}
CH^\ast(X)_\bbQ \simeq CH^\ast(C)_\bbQ \oplus \bbQ^{\oplus 2m-4}\,.
\end{equation}
Now, recall from Voisin \cite[Example 21.3.1]{voisin-book}
that the rational Chow ring of every curve $C$ admits the following decomposition $CH^\ast(C)_\bbQ\simeq \bbQ \oplus A_0(C)_\bbQ \oplus \bbQ$.
By combining this decomposition with \eqref{eq:Chow-1}, one hence obtains 
\begin{equation}\label{eq:iso-curve}
CH^\ast(X)_\bbQ \simeq A_0(C)_\bbQ \oplus \bbQ^{\oplus 2m-2}\,.
\end{equation}
\begin{proposition}\label{prop:new}
The inclusion $A_0(C)_\bbQ \subset CH^\ast(X)_\bbQ$ induces an isomorphism $A_0(C)_\bbQ \simeq A_{m-2}(X)_\bbQ$.
\end{proposition}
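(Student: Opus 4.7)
The inclusion $A_0(C)_\bbQ \hookrightarrow CH^\ast(X)_\bbQ$ of \eqref{eq:iso-curve} is the restriction, to the $A_0(C)_\bbQ$-summand of $CH^\ast(C)_\bbQ = \bbQ \oplus A_0(C)_\bbQ \oplus \bbQ$, of the morphism $\Phi_\ast : CH^\ast(C)_\bbQ \to CH^\ast(X)_\bbQ$ induced by the Kuznetsov Fourier-Mukai embedding $\Phi : \perf(C) \hookrightarrow \perf(X)$ of \eqref{eq:semi-new}, with kernel $\cE \in \perf(C \times X)$. I will prove the proposition in two main stages: first identifying $\Phi_\ast(A_0(C)_\bbQ)$ with $\bigoplus_i A^i(X)_\bbQ$, and then forcing this into the single codimension $m-1$.

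\textbf{Identification with algebraically trivial classes.} As an algebraic correspondence, $\Phi_\ast$ preserves algebraic equivalence: any smooth connected scheme $T$ witnessing $z \in A_0(C)_\bbQ$ witnesses $\Phi_\ast(z)$ as well on $X$. Hence $\Phi_\ast(A_0(C)_\bbQ) \subseteq \bigoplus_i A^i(X)_\bbQ$. Conversely, the complementary $\bbQ^{\oplus 2m-2}$-summand in \eqref{eq:iso-curve} is spanned in $CH^\ast(X)_\bbQ$ by the Chern characters $\mathrm{ch}(\cO_X(i)) = e^{ih}$ for $i = 1, \ldots, 2m-4$ together with $\Phi_\ast(1_C)$ and $\Phi_\ast([\mathrm{pt}])$; applying GRR and the projection formula, the latter two reduce to $\bbQ$-polynomials in the hyperplane class $h = c_1(\cO_X(1))$, so this whole summand consists of classes represented by iterated hyperplane sections with non-zero Betti class. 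In particular this summand contains no non-zero algebraically trivial class, and therefore $\bigoplus_i A^i(X)_\bbQ = \Phi_\ast(A_0(C)_\bbQ)$.

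\textbf{Codimension concentration.} It remains to show $A_0(C)_\bbQ \cap CH^i(X)_\bbQ = 0$ for $i \neq m-1$. For this I use Theorem~\ref{thm:Jac1} applied to the categorical datum $(\star)$ of \eqref{eq:semi-new}: since the orthogonal $\langle \cO_X(1), \ldots, \cO_X(2m-4)\rangle$ admits a full exceptional collection, it has vanishing noncommutative Jacobian, and the theorem produces a split injection $\tau : J(C) \hookrightarrow \prod_i J^i_a(X)$ commuting with $\Phi_\ast$ and Abel-Jacobi. By Reid \cite[Theorem~4.14]{reid:thesis}, $\tau$ factors as a principally polarized isomorphism $J(C) \simeq J^{m-2}_a(X)$ through the $(m-2)$-component of $\prod_i J^i_a(X)$. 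Combining this with the Chow-K\"unneth decomposition $M(C)_\bbQ = {\bf 1} \oplus h^1(C) \oplus \bbL$ (which isolates $A_0(C)_\bbQ = A^1(h^1(C))_\bbQ$) and with Reid's cohomological computation \cite[\S 0.7, Cor.~3.15]{reid:thesis} that the only non-trivial odd Betti cohomology of $X$ sits in degree $2m-3$, the Hodge realization forces the piece of $M(X)_\bbQ$ on which $\Phi_\ast(A_0(C)_\bbQ)$ lives to carry Hodge weight $2(m-2)+1$, that is, to lie in $CH^{m-1}(X)_\bbQ$. Surjectivity onto $A_{m-2}(X)_\bbQ$ then follows from the previous step, since $A_{m-2}(X)_\bbQ \subseteq \bigoplus_i A^i(X)_\bbQ = \Phi_\ast(A_0(C)_\bbQ)$.

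\textbf{Main obstacle.} The delicate point is the codimension concentration: GRR makes $\Phi_\ast$ inhomogeneous a priori, and Abel-Jacobi alone is not injective on algebraically trivial cycles in arbitrary codimension, so neither tool by itself pins the image to codimension $m-1$. The concentration is extracted only by combining (i) the global Jacobian compatibility provided by Theorem~\ref{thm:Jac1} (which channels the noncommutative motive of $\perf(C)$ into the intermediate Jacobians of $X$), (ii) Reid's principally polarized isomorphism $J(C) \simeq J^{m-2}_a(X)$ (singling out the middle component), and (iii) the Hodge-theoretic input that the only non-trivial transcendental Hodge structure on $X$ appears in degree $2m-3$.
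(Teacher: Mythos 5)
Your proposal shares the paper's core mechanism (Theorem~\ref{thm:Jac1} applied to the categorical data $(\star)$ coming from \eqref{eq:semi-new}, plus Reid's identification $J(C)\simeq J^{m-2}_a(X)$), but two of your steps would not survive scrutiny. The main one is the ``codimension concentration'': it cannot be extracted from the Hodge realization, because the classes in $\Phi_\ast(A_0(C)_\bbQ)$ are algebraically, hence homologically, trivial, so they are invisible to Betti/Hodge cohomology; Reid's computation of $H^{\ast}(X(\bbC),\bbQ)$ therefore says nothing about which graded pieces of $CH^\ast(X)_\bbQ$ they occupy. You correctly note that Abel--Jacobi need not be injective in arbitrary codimension, but you then replace it by a tool that sees even less. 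The paper sidesteps this issue entirely: it never tries to show that the image of $A_0(C)_\bbQ$ is concentrated in codimension $m-1$; it only needs the composite $A_0(C)_\bbQ\simeq J(C)_\bbQ\xrightarrow{\ \tau\ }J^{m-2}_a(X)_\bbQ\simeq A_{m-2}(X)_\bbQ$ to be an isomorphism, and the last identification is precisely Reid's theorem that the Abel--Jacobi map $A^{m-1}(X)_\bbQ\to J^{m-2}(X)_\bbQ$ is an isomorphism onto $J^{m-2}_a(X)_\bbQ$ --- an input you never invoke, and without which you cannot return from Jacobians to Chow groups at all.

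Two auxiliary points also need repair. You only use part (ii) of Theorem~\ref{thm:Jac1} (split injectivity of $\tau$); to conclude that $\tau(J(C))=J^{m-2}_a(X)$ and that the remaining $J^i_a(X)$ vanish you need part (iii) (both orthogonal complements have vanishing Jacobian: $\cT_C^\perp=0$ and $\langle\cO_X(1),\ldots,\cO_X(2m-4)\rangle$ is a full exceptional collection), so that $\tau$ is an isomorphism onto the \emph{whole} product $\prod_i J^i_a(X)$, after which the dimension count $\dim J(C)=\dim J^{m-2}_a(X)$ forces the other factors to be trivial; Reid's theorem alone does not make $\tau$ ``factor through the $(m-2)$-component''. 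Finally, in your first stage, GRR does not show that $\Phi_\ast(1_C)$ and $\Phi_\ast([\mathrm{pt}])$ are polynomials in $h$: these are Chern characters of the images of $\cO_C$ and $\cO_{\mathrm{pt}}$ under a Fourier--Mukai functor whose kernel you have no control over, so the claim that the complementary $\bbQ^{\oplus 2m-2}$-summand of \eqref{eq:iso-curve} meets $\bigoplus_i A^i(X)_\bbQ$ trivially is unsupported. Surjectivity onto $A_{m-2}(X)_\bbQ$ should instead be obtained, as in the paper, from the surjectivity of $\tau$ onto $J^{m-2}_a(X)$ combined with Reid's Abel--Jacobi isomorphism.
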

\begin{proof}
Thanks to the semi-orthogonal decomposition \eqref{eq:semi-new}, the irreducible $k$-schemes $C$ and $X$ are related
by the categorical data $(\star)$ of \S \ref{sub:categorical}. Let us now verify all conditions of
Theorem~\ref{thm:Jac1}. The bilinear pairings \eqref{eq:pairings2} associated to $C$ and $X$ are non-degenerate 
since every
curve and every complete intersection of smooth hypersurfaces in the projective space satisfy Grothendieck's standard
conjecture of Lefschetz type; see Andr{\'e} \cite[\S 5.2.4.5]{Andre} and Grothendieck \cite[\S 3]{grot-standard}. The fact that the inclusion functor $\perf(C)
\hookrightarrow \perf(X)$ is of
Fourier-Mukai type was proved by Bondal-Orlov in \cite[Theorem 2.7]{bondal-orlov}. The remaining conditions
of Theorem~\ref{thm:Jac1} are clearly verified. As a consequence, we obtain an isomorphism
in $\mathrm{Ab}(k)_\bbQ$
\begin{equation}\label{eq:induced-Jac}
\tau:J(C) \stackrel{\sim}{\too} \prod_{i=0}^{2m-4} J_a^i(X) \, = \, J_a^{m-2}(X) \times \prod_{i\neq m-2} J_a^i(X)\,.
\end{equation}
This implies that $J^\kappa_a(X) = J^{m-2}_a(X) \subseteq \tau(J(C))$. As proved by Reid in \cite[Theorem~4.14]{reid:thesis},
we have $J^{m-2}_a(X) \simeq J(C)$ (as principally polarized abelian varieties). As a consequence, we conclude that
$\tau(J(C)) \simeq J^{m-2}_a(X)$. Now, recall once again from Reid \cite[Theorem~4.14]{reid:thesis} that the Abel-Jacobi maps
\begin{eqnarray*}
AJ^1: A^1(C)_\bbZ \twoheadrightarrow J^0(C)=J(C) && AJ^{m-1}: A^{m-1}(X)_\bbZ \twoheadrightarrow J^{m-2}(X)
\end{eqnarray*}
give rise to the isomorphisms $A_0(C)_\bbQ=A^1(C)_\bbQ \simeq J(C)_\bbQ$ and $A_{m-2}(X)_\bbQ = A^{m-1}(X)_\bbQ \simeq J_a^{m-2}(X)_\bbQ$. Making use of them and of $\tau$ we then obtain the desired isomorphism $A_0(C)_\bbQ \simeq A_{m-2}(X)_\bbQ$.
\end{proof}

From Proposition~\ref{prop:new} one obtains the following isomorphism
\begin{equation}\label{eq:quotient1}
CH^\ast(X)_\bbQ/A_0(C)_\bbQ \simeq \bigoplus_{i \neq m-2} CH_i(X)_\bbQ \oplus (CH_{m-2}(X)_\bbQ/A_{m-2}(X)_\bbQ)\,.
\end{equation}
On the other hand, \eqref{eq:iso-curve} gives rise to the isomorphism
\begin{equation}\label{eq:quotient2}
CH^\ast(X)_\bbQ/A_0(C)_\bbQ \simeq \bbQ^{\oplus (2m-2)}\,.
\end{equation}
Now, note that the intersection of the generic $m$-dimensional linear subspace of $\bbP^{2m-1}$ with
$X$ gives rise to a $(m-2)$-dimensional cycle $Z$ on $X$ with $\deg(Z) \neq 0$. This implies that  $Z \notin A_{m-2}(X)_\bbQ$. Since $Z$ is non-torsion, we conclude also that $CH_{m-2}(X)_\bbQ/A_{m-2}(X)_\bbQ \neq 0$. Similarly, by intersecting the 
generic $(i+2)$-dimensional linear subspace of $\bbP^{2m-1}$
with $X$ one obtains a non-torsion $i$-dimensional cycle on $X$. This allows us to conclude that
$CH_i(X)_\bbQ \neq 0$ for all $i \neq m-2$. Finally, a easy dimension counting argument (obtained by combining
\eqref{eq:quotient1}\text{-}\eqref{eq:quotient2} with the fact that $\mathrm{dim}(X)+1 = 2m-2$) implies that $CH_i(X)_\bbQ \simeq \bbQ$ when $i \neq m-2$ and that $CH_{m-2}(X)_\bbQ/A_{m-2}(X)_\bbQ  \simeq \bbQ$. This implies \eqref{eq:need2} and so the proof is finished.

\vspace{0.2cm}
\section{Proof of Theorem~\ref{thm:main} - Case ($\mathrm{ii}$)}
%---------------------------------------------------------------------------------------------------
Let $n=2m$ for some integer $m \geq 2$. Since we are intersecting three odd-dimensional quadrics,
we have $\mathrm{dim}(X)=2m-3$ and $\kappa=m-2$. Hence, in order to prove Theorem~\ref{thm:main},
we need to show that 
\begin{eqnarray}\label{eq:need3}
CH_i(X)_\bbQ \simeq \bbQ & \mathrm{for} & i < m-2\,.
\end{eqnarray}
The linear span of these quadrics gives rise to a flat quadric fibration $q:Q \to \bbP^2$
of relative dimension $n-1$,
% whose reduction by hyperbolic splitting is a conic bundle $q':Q' \to \bbP^2$.
As explained in
\S\ref{sec:HPD}, one obtains the semi-orthogonal decomposition
\begin{eqnarray}\label{eq:decomp4}
\perf(X) & = & \langle \perf(\bbP^2, \cC_0), \cO_X(1), \ldots, \cO_X(2m-5)\rangle\,.
\end{eqnarray}
From \eqref{eq:decomp4} we conclude automatically that 
\begin{equation}\label{eq:iso-Grothendieck}
K_0(X)_\bbQ \simeq K_0(\perf(\bbP^2, \cC_0))_\bbQ \oplus \bbQ^{\oplus 2m-5}\,.
\end{equation}
Recall from Example \ref{subsection:hyp-split-3-quad} that
$Q \to \bbP^2$ has a discriminant divisor, whose normalization $C$ comes with
an \'etale double cover
$\widetilde{C} \to C$; we write $\iota$ for the involution on $\widetilde{C}$. 
Thanks to the work of Bouali \cite[Corollary 4.3]{bouali}, we have the following isomorphism
\begin{equation}\label{eq:bouali}
CH^*(Q)_\bbQ \simeq 
CH^* (\bbP^2)_\bbQ^{\oplus 2m} \oplus CH_1(\widetilde{C})^-_\bbQ\,,
\end{equation}
where $CH_1(\widetilde{C})^-_\bbQ$ stands for the
$\iota$-anti-invariant part of $CH_1(\widetilde{C})_\bbQ$.
As explained by Beauville in \cite[\S 2]{beauvilleprym}, the \'etale double cover
$\widetilde{C} \to C$ gives rise to a natural splitting of $J(\widetilde{C})=CH_1(\widetilde{C})$
into the $\iota$-invariant part and the $\iota$-anti-invariant part, where the latter identifies with
the Prym variety $\mathrm{Prym}(\widetilde{C}/C)$.
In particular, we have an isomorphism $CH_1(\widetilde{C})^-_\bbQ \simeq A_m(Q)_\bbQ$.
Consequently, since $A_*(\bbP^2)_\bbQ=0$, we conclude that $A_*(Q)_\bbQ =
A_m(Q)_\bbQ \simeq CH_1 (\widetilde{C})^-_\bbQ$.

Now, note that the general semi-orthogonal decomposition \eqref{eq:semiort-for-quad-fib} reduces to
\begin{equation}\label{eq:semi-orthogonal-new}
\perf(Q) = \langle \perf(\bbP^2,\cC_0), \perf(\bbP^2)_1, \ldots \perf(\bbP^2)_{2m-1} \rangle\,.
\end{equation}
Moreover, thanks to Beilinson \cite{beilinson}, $\perf(\bbP^2)$ is generated by $3$ exceptional objects.

\begin{proposition}\label{prop:Jac2}
The Fourier-Mukai functor $\perf(Q) \to \perf(\bbP^2,\cC_0) \hookrightarrow \perf(X)$
induces an isomorphism $A_m(Q)_\bbQ \simeq A_{m-2}(X)_\bbQ$.
\end{proposition}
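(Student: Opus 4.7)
The plan closely mirrors the proof of Proposition~\ref{prop:new}: I would apply Theorem~\ref{thm:Jac1} to the categorical data $(\star)$ in which $\cT_Q = \cT_X = \perf(\bbP^2,\cC_0)$ is the common subcategory appearing in both \eqref{eq:semi-orthogonal-new} and \eqref{eq:decomp4}, with $\phi = \id$, and then pass from the resulting isomorphism of algebraic intermediate Jacobians to the stated isomorphism on algebraically trivial cycles via the Abel-Jacobi map. Explicitly, in this setup the complements are $\cT_Q^\perp = \langle \perf(\bbP^2)_1, \ldots, \perf(\bbP^2)_{2m-1}\rangle$ and $\cT_X^\perp = \langle \cO_X(1), \ldots, \cO_X(2m-5)\rangle$, and $\Phi$ is the composition $\perf(Q) \to \perf(\bbP^2,\cC_0) \hookrightarrow \perf(X)$.

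First I would verify the four hypotheses of Theorem~\ref{thm:Jac1}. Non-degeneracy of the pairings \eqref{eq:pairings2} for $X$ follows from Grothendieck's standard conjecture of Lefschetz type for smooth complete intersections in projective space, exactly as in the proof of Proposition~\ref{prop:new}. For $Q$, rather than invoking Lefschetz for the hypersurface $Q \subset \bbP^2 \times \bbP^{2m}$ (which is not known a priori), I would exploit Bouali's decomposition \eqref{eq:bouali} together with the vanishing of $A_\ast(\bbP^2)_\bbQ$ to deduce $A_\ast(Q)_\bbQ = A_m(Q)_\bbQ \simeq CH_1(\widetilde{C})^-_\bbQ$: the only non-trivial $NH^{2i+1}_{dR}(Q)$ sits in the middle degree $2m+1$, is generated by the curve $\widetilde{C}$, and the pairing there is just Poincar\'e duality. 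The Fourier-Mukai property of $\Phi$ follows from Kuznetsov's \cite[\S5]{kuznetquadrics} realization of $\perf(\bbP^2,\cC_0)$ as an admissible Fourier-Mukai subcategory of both $\perf(Q)$ and $\perf(X)$. Finally, both $\cT_Q^\perp$ and $\cT_X^\perp$ admit full exceptional collections --- the former by Beilinson's exceptional collection on $\perf(\bbP^2)$ replicated $2m-1$ times, the latter directly via the line bundles $\cO_X(1),\ldots,\cO_X(2m-5)$ --- so ${\bf J}(U(\cT_Q^{\perp,\dg})_\bbQ) = 0 = {\bf J}(U(\cT_X^{\perp,\dg})_\bbQ)$.

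With these inputs Theorem~\ref{thm:Jac1}(iii) would deliver an isomorphism $\tau : \prod_{i=0}^{2m} J^i_a(Q) \isotoo \prod_{i=0}^{2m-4} J^i_a(X)$ in $\Ab(k)_\bbQ$. The vanishing $J^i_a(Q) = 0$ for $i \neq m$ (again from Bouali) collapses the source to the single factor $J^m_a(Q) \simeq \mathrm{Prym}(\widetilde{C}/C)$; combining this with Beauville's identification $J^{m-2}_a(X) \simeq \mathrm{Prym}(\widetilde{C}/C)$ \cite[Theorem~6.3]{beauvilleprym}, the fact that $\tau$ is an isomorphism forces every $J^i_a(X)$ with $i \neq m-2$ to vanish, and $\tau$ descends to an isomorphism $J^m_a(Q) \simeq J^{m-2}_a(X)$. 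The Abel-Jacobi surjections $A^{m+1}(Q)_\bbZ \twoheadrightarrow J^m_a(Q)$ and $A^{m-1}(X)_\bbZ \twoheadrightarrow J^{m-2}_a(X)$ then become isomorphisms after $\otimes\bbQ$, exactly as in Proposition~\ref{prop:new}, and composing with $\tau$ produces the desired isomorphism $A_m(Q)_\bbQ \simeq A_{m-2}(X)_\bbQ$. The main obstacle I anticipate is the non-degeneracy of the pairings for $Q$: since $Q$ is not a complete intersection in projective space, one cannot cite Grothendieck-Andr\'e directly, and the argument must instead lean on Bouali's explicit motivic description in order to reduce the question to a curve, where Poincar\'e duality takes over.
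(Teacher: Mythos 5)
Your proposal follows essentially the same route as the paper: the same categorical data $(\star)$ built from \eqref{eq:semi-orthogonal-new} and \eqref{eq:decomp4}, an application of Theorem~\ref{thm:Jac1}(iii), the collapse of $\prod_i J^i_a(Q)$ to $J^m_a(Q)$ via Bouali, Beauville's identification of both sides with $\mathrm{Prym}(\widetilde{C}/C)$, and the passage to algebraically trivial cycles through the Abel--Jacobi maps. The one point where you genuinely diverge is the non-degeneracy of the pairings \eqref{eq:pairings2} for $Q$: the paper settles this by citing Vial \cite[Theorem 7.4]{vial-fibrations}, which establishes the Lefschetz standard conjecture for quadric fibrations over projective surfaces, whereas you propose to read it off directly from Bouali's motivic decomposition \eqref{eq:bouali} --- the odd curve-generated cohomology of $Q$ is concentrated in the middle degree, where it exhausts $H^{2m+1}(Q)$ and carries the full (hence non-degenerate) Poincar\'e pairing. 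Your variant is correct and arguably more self-contained, since it reuses an input already needed later in the proof instead of importing a separate Lefschetz-type theorem; the paper's citation of Vial is the quicker formal route and keeps the verification uniform with the treatment of $X$. Either way the remaining steps are identical, so the proposal is sound.
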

\begin{proof}
Thanks to the above semi-orthogonal decompositions \eqref{eq:decomp4} and
\eqref{eq:semi-orthogonal-new},
one observes that the irreducible $k$-schemes $Q$
and $X$ are related by the categorical data $(\star)$ of \S \ref{sub:categorical}. Let us now verify
that all conditions of
Theorem~\ref{thm:Jac1} are verified. The bilinear pairings \eqref{eq:pairings2} associated to $Q$ and
$X$ are non-degenerate
since Grothendieck's standard conjecture
of Lefschetz type holds for every complete intersection
of smooth hypersurfaces in the projective space (see Grothendieck \cite[\S 3]{grot-standard})
and for quadric fibrations over projective surfaces
(see Vial \cite[Theorem 7.4]{vial-fibrations}).
The functor $\perf(Q) \to \perf(\bbP^2,\cC_0)
\hookrightarrow \perf(X)$ is of Fourier-Mukai type since it is obtained by homological projective
duality; see \cite[Theorems 5.3 and 5.4]{kuznetquadrics}. The remaining conditions of Theorem~\ref{thm:Jac1}(iii) are clearly verified. As a consequence, one obtains a 
well-defined isomorphism in $\mathrm{Ab}(k)_\bbQ$
\begin{equation}\label{eq:induced-Jac2}
\tau:\prod_{i=0}^{2m} J_a^i(Q) \stackrel{\sim}{\too} \prod_{i=0}^{2m-4} J_a^i(X)\,.
\end{equation}
Thanks to Bouali's computation \eqref{eq:bouali}, we have $J^{i}_a(Q)=0$ when
$i\neq m$. Consequently, the above isomorphism \eqref{eq:induced-Jac2} reduces to
\begin{equation}\label{eq:induced-Jac3}
\tau:J_a^m(Q) \stackrel{\sim}{\too} \prod_{i=0}^{2m-4} J_a^i(X)\,.
\end{equation}
Beauville proved in \cite[Theorems 2.1 and 6.3]{beauvilleprym} that $J^\kappa_a(X)=J^{m-2}_a(X)$ is
isomorphic, as a principally polarized abelian variety, to the Prym variety
$\mathrm{Prym}(\widetilde{C}/C) \simeq J_a^m(Q)$. We hence conclude that $\tau(J_a^m(Q))
= J^{m-2}_a(X)$ and that $J^i_a(X)$ is isogenous to zero when $i\neq m-2$.
Now, recall once again from Beauville 
\cite[Proposition 3.3 and Theorem~6.3]{beauvilleprym} that the Abel-Jacobi maps
\begin{eqnarray*}
AJ^{m}: A^{m+1}(Q)_\bbZ \twoheadrightarrow J^{m}(Q) && AJ^{m-2}: A^{m-1}(X)_\bbZ \twoheadrightarrow J^{m-2}(X)
\end{eqnarray*}
give rise to the isomorphisms $A_m(Q)_\bbQ = A^{m+1}(Q)_\bbQ \simeq J^{m+1}(Q')_\bbQ$
and $A_{m-2}(X)_\bbQ = A^{m-1}(X)_\bbQ \simeq J^{m-2}(X)_\bbQ$. Making use of them and of 
$\tau$ one then obtains the desired isomorphism $A_m(Q)_\bbQ \simeq A_{m-2}(X)_\bbQ$.
\end{proof}

\begin{lemma}\label{lem:prym-in-P2}
There is an isomorphism of $\bbQ$-vector spaces
 \begin{equation}\label{eq:Saito}
 K_0(\perf(\bbP^2,\cC_0))_\bbQ \simeq A_{m}(Q)_\bbQ\oplus \bbQ^{\oplus 3}\,.
 \end{equation}
 \end{lemma}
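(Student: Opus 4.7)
The plan is to compute $K_0(Q)_\bbQ$ in two different ways and compare the results.

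First, I will apply the semi-orthogonal decomposition \eqref{eq:semi-orthogonal-new} combined with the additivity of $K_0$ (rationally) under semi-orthogonal decompositions. Since $\perf(\bbP^2)_i \simeq \perf(\bbP^2)$ for each $i$ and Beilinson's exceptional collection gives $K_0(\bbP^2)_\bbQ \simeq \bbQ^{\oplus 3}$, this yields
\[
K_0(Q)_\bbQ \;\simeq\; K_0(\perf(\bbP^2,\cC_0))_\bbQ \,\oplus\, \bbQ^{\oplus 3(2m-1)}.
\]

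Second, I will compute $K_0(Q)_\bbQ$ via Chow groups. Since $Q$ is smooth, the Chern character induces a $\bbQ$-linear isomorphism $K_0(Q)_\bbQ \simeq CH^\ast(Q)_\bbQ$. Bouali's computation \eqref{eq:bouali} then yields
\[
K_0(Q)_\bbQ \;\simeq\; CH^\ast(\bbP^2)_\bbQ^{\oplus 2m} \,\oplus\, CH_1(\widetilde{C})^-_\bbQ \;\simeq\; \bbQ^{\oplus 6m} \,\oplus\, CH_1(\widetilde{C})^-_\bbQ.
\]

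Third, I will identify the anti-invariant summand with $A_m(Q)_\bbQ$. This uses the discussion in the excerpt just before the Lemma: since $A_\ast(\bbP^2)_\bbQ = 0$ and $A_\ast(Q)_\bbQ$ is concentrated in dimension $m$, Bouali's decomposition forces $CH_1(\widetilde{C})^-_\bbQ \simeq A_m(Q)_\bbQ$.

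Finally, equating the two descriptions of $K_0(Q)_\bbQ$ and cancelling the $\bbQ^{\oplus (6m-3)}$ common summand gives the desired isomorphism
\[
K_0(\perf(\bbP^2,\cC_0))_\bbQ \;\simeq\; A_m(Q)_\bbQ \,\oplus\, \bbQ^{\oplus 3}.
\]
No step here is a serious obstacle; the only subtlety is invoking the additivity of rational $K_0$ along the semi-orthogonal decomposition of $\perf(Q)$ (which holds since each component admits an enhancement coming from $\perf^\dg(Q)$), and keeping the bookkeeping of the Chow-theoretic summands straight so that the cancellation yields exactly three copies of $\bbQ$.
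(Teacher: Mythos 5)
Your proposal is correct and follows essentially the same route as the paper: compute $K_0(Q)_\bbQ$ once via the semi-orthogonal decomposition \eqref{eq:semi-orthogonal-new} and once via $K_0(Q)_\bbQ \simeq CH^\ast(Q)_\bbQ$ together with Bouali's formula \eqref{eq:bouali}, identify $CH_1(\widetilde{C})^-_\bbQ$ with $A_m(Q)_\bbQ$, and cancel the $\bbQ^{\oplus 6m-3}$ summand. The only cosmetic difference is that the paper additionally invokes Proposition~\ref{prop:Jac2} to locate $A_m(Q)_\bbQ$ inside the summand $K_0(\perf(\bbP^2,\cC_0))_\bbQ$, which is not needed for the bare vector-space isomorphism you are asked to prove.
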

\begin{proof}
Using the semi-orthogonal decomposition \eqref{eq:semi-orthogonal-new}, we obtain the isomorphism
\begin{equation}\label{eq:K-iso}
K_0(Q)_\bbQ \simeq K_0(\perf(\bbP^2,\cC_0))_\bbQ \oplus \bbQ^{\oplus 6m-3}\,.
\end{equation}
Proposition \ref{prop:Jac2} implies that $A_m(Q)_\bbQ \subset K_0(\perf(\bbP^2,\cC_0))_\bbQ$
via the canonical isomorphism $K_0(Q)_\bbQ \simeq CH^\ast(Q)_\bbQ$. Hence, using the isomorphism
$CH^*(\bbP^2)_\bbQ \simeq \bbQ^3$, we conclude from Bouali's computation
\eqref{eq:bouali} that
\begin{equation}\label{eq:boual2}
CH^*(Q)_\bbQ \simeq A_{m}(Q)_\bbQ \oplus \bbQ^{6m}.
\end{equation}
The desired isomorphism \eqref{eq:Saito} is then obtained by comparing
\eqref{eq:K-iso} with \eqref{eq:boual2} via the
canonical isomorphism $K_0(Q)_\bbQ \simeq CH^*(Q)_\bbQ$.
 \end{proof}

By combining the natural isomorphism $K_0(X)_\bbQ \simeq CH^\ast(X)_\bbQ$ with \eqref{eq:iso-Grothendieck} and \eqref{eq:Saito}, one concludes that
\begin{equation}\label{eq:iso}
CH^\ast(X)_\bbQ \simeq A_m(Q)_\bbQ \oplus \bbQ^{\oplus 2m-2}\,.
\end{equation}
 
From Proposition \ref{prop:Jac2} one obtains the following isomorphism
\begin{equation}\label{eq:quotient3}
CH^\ast(X)_\bbQ/A_m(Q)_\bbQ \simeq \bigoplus_{i \neq m-2} CH_i(X)_\bbQ \oplus \left(CH_{m-2}(X)_\bbQ/A_{m-2}(X)_\bbQ \right)\,.
\end{equation}
On the other hand, \eqref{eq:iso} gives rise to the isomorphism
\begin{equation}\label{eq:quotient4}
CH^\ast(X)_\bbQ/A_m(Q)_\bbQ \simeq \bbQ^{\oplus 2m-2}\,. 
\end{equation}

Now, note that the intersection of the generic $(m+1)$-dimensional linear subspace of $\bbP^{2m-1}$ with
$X$ gives rise to a $(m-2)$-dimensional cycle $Z$ on $X$ with $\deg(Z) \neq 0$. Hence, $Z \notin A_{m-2}(X)_\bbQ$.
Since $Z$ is non-torsion,
we conclude that $CH_{m-2}(X)_\bbQ/A_{m-2}(X)_\bbQ \neq 0$. Similarly, by intersecting the 
generic $(i+3)$-dimensional linear subspace of $\bbP^{2m-1}$
with $X$ one obtains a non-torsion $i$-dimensional cycle on $X$. This allows us to conclude that
$CH_i(X)_\bbQ \neq 0$ for all $i \neq m-2$. Finally, a easy dimension counting argument (obtained by combining \eqref{eq:quotient3}\text{-}\eqref{eq:quotient4} with the fact that $\mathrm{dim}(X)+1=2m-2$) implies that $CH_i(X)_\bbQ \simeq \bbQ$ when $i \neq m-2$ and that $CH_{m-2}(X)_\bbQ/A_{m-2}(X)_\bbQ  \simeq \bbQ$. This implies \eqref{eq:need3} and so the proof is finished.
%--------------------------------------------------------------------------------------------
\section{Proof of Theorem \ref{thm:main2}}
%--------------------------------------------------------------------------------------------
%--------------------------------------------------------------------------------------------
\subsection*{Two odd-dimensional quadrics}
Let $n=2m$ for some integer $m\geq 2$. Since we are intersecting two odd-dimensional quadrics, we have $\mathrm{dim}(X)=2m-2$. Recall from the proof of Proposition~\ref{prop:Jac2} that $X$ satisfies Grothendieck's standard conjecture of Lefschetz type, and from \eqref{eq:isos} that we have the motivic decomposition $NC(X)_\bbQ \simeq {\bf 1}_\bbQ^{\oplus 4m}$. By applying to it the (additive) Jacobian functor ${\bf J}(-)$, we obtain the following isomorphisms in $\mathrm{Ab}(k)_\bbQ$
$$  \prod_{i=0}^{2m-3} J_a^i(X) \simeq {\bf J}(NC(X)_\bbQ) \simeq \bigoplus_{i=1}^{4m} {\bf J}({\bf 1}_\bbQ) \simeq \bigoplus_{i=1}^{4m} {\bf J}(NC(\mathrm{Spec}(k))_\bbQ)\simeq 0\,.$$
This clearly implies that $J_a^i(X)=0$ for every $i$.

%--------------------------------------------------------------------------------------------
%--------------------------------------------------------------------------------------------
\subsection*{Two even-dimensional quadrics}
Let $n=2m-1$ for some integer $m \geq 2$. Since we are intersecting two even-dimensional quadrics, we have $\mathrm{dim}(X)=2m-3$ and $\kappa=m-2$. Recall from \eqref{eq:semi-new} the construction of the semi-orthogonal decomposition
\begin{equation}\label{eq:decomp-aux}
\perf(X)=\langle \perf(C), \cO_X(1), \ldots, \cO_X(2m-4)\rangle
\end{equation}
and from the proof of Proposition \ref{prop:Jac2} that $C$ and $X$ satisfy Grothendieck's standard conjecture of Lefschetz type. Thanks to Proposition~\ref{prop:semi}, \eqref{eq:decomp-aux} gives rise to the motivic decomposition $NC(X)_\bbQ \simeq NC(C)_\bbQ \oplus 
{\bf 1}_\bbQ^{\oplus 2m-4}$. By applying to it the (additive) Jacobian functor ${\bf J}(-)$, we obtain the following isomorphisms in $\mathrm{Ab}(k)_\bbQ$
$$\prod_{i=0}^{2m-4} J_a^i(X) \simeq {\bf J}(NC(X)_\bbQ) \simeq {\bf J}(NC(C)_\bbQ) \oplus {\bf J}(NC(\mathrm{Spec}(k))_\bbQ)^{\oplus 2m-4} \simeq J(C)\,.$$ 
Now, recall from the proof of Proposition \ref{prop:new} that 
$$ J_a^{m-2}(X)_\bbQ \simeq A_{m-2}(X)_\bbQ \simeq A_0(C)_\bbQ \simeq J(C)_\bbQ\,.$$
By combining the above two isomorphisms we conclude that $J_a^i(X)$ is isogenous to zero (and hence is zero) for $i \neq \kappa$.
%--------------------------------------------------------------------------------------------
%--------------------------------------------------------------------------------------------
\subsection*{Three odd-dimensional quadrics}
%--------------------------------------------------------------------------------------------
Let $n=2m$ for some integer $m \geq 2$. Since we are intersecting three odd-dimensional quadrics, 
we have $\mathrm{dim}(X)=2m-3$ and $\kappa=m-2$. We start by recalling a consequence of Conjecture \ref{SPconjecture},
which is based on Bloch-Srinivas methods of decomposition of the diagonal.
\begin{proposition}{(Paranjape \cite[Theorem 6.3]{paranjape})}\label{prop:consequences}
Let $Y$ be a $d$-dimensional complete intersection satisfying Conjecture \ref{SPconjecture}. In this case we have:
\begin{itemize}
\item[(i)] $CH_i(Y)_\bbQ \simeq \bbQ$ for every $i < \kappa$;
\item[(ii)] $CH_i(Y)_\bbQ \simeq \bbQ$ for every $i > d-\kappa$;
\item[(iii)] $CH_{d-\kappa}(Y)_\bbQ$ is a finite dimensional $\bbQ$-vector space.
\end{itemize}
\end{proposition}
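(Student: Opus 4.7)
Part (i) is simply the content of Conjecture~\ref{SPconjecture}, so there is nothing to prove. The content therefore lies in parts (ii) and (iii), and the plan is to derive them from (i) via the Bloch--Srinivas technique of decomposition of the diagonal.

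The first step is to upgrade the hypothesis to a statement over the generic point of $Y$: for every $j < \kappa$, the group $CH_j(Y_{\overline{k(Y)}})_\bbQ$ is one-dimensional, generated by a rational multiple of $h^{d-j}|_Y$, where $h \in CH^1(\bbP^n)$ denotes the hyperplane class (which restricts nontrivially to $Y$ in all relevant codimensions since $Y$ is a complete intersection in $\bbP^n$). Iterating a Bloch--Srinivas spreading-out argument for $j = 0, 1, \dots, \kappa - 1$ should then yield a decomposition of the diagonal
$$\Delta_Y \;=\; \sum_{j=0}^{\kappa-1} c_j\, (h^{d-j}|_Y) \times (h^j|_Y) \;+\; \Gamma \quad \in \quad CH_d(Y \times Y)_\bbQ,$$
with the $c_j \in \bbQ$ determined by intersection numbers on $Y$, and with $\Gamma$ supported on $Y \times W$ for some closed subscheme $W \subset Y$ of dimension $d - \kappa$. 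With the decomposition in hand, I would apply $\Delta_Y$ to an arbitrary class $\alpha \in CH_i(Y)_\bbQ$ as the identity correspondence. A direct projection-formula computation shows that the product piece $(h^{d-j}|_Y) \times (h^j|_Y)$ contributes $c_j \cdot \deg(\alpha \cdot h^{d-j}|_Y) \cdot h^{j}|_Y$, which vanishes unless $j = d - i$, while $\Gamma_*(\alpha)$ is supported on $W$ and hence factors through $CH_i(W)_\bbQ$.

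For $i > d - \kappa = \dim W$, this latter group vanishes for dimension reasons, and so $\alpha$ is forced to equal a rational multiple of $h^{d-i}|_Y$; this yields (ii), using the non-vanishing of $h^{d-i}|_Y \in CH_i(Y)_\bbQ$ (which follows from $Y \hookrightarrow \bbP^n$ being a complete intersection). For $i = d - \kappa$, the product piece contributes $0$ (because $j = \kappa$ lies outside $\{0, \dots, \kappa - 1\}$) while $\Gamma_*(\alpha)$ lies in $CH_{d-\kappa}(W)_\bbQ$, the top-dimensional Chow group of the projective scheme $W$, which is spanned by the finitely many classes of its irreducible components; this yields (iii). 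The main obstacle in this strategy is the establishment of the diagonal decomposition in the second paragraph: it requires a careful function-field/specialization analysis in which the hypothesis for \emph{every} $j < \kappa$ is used to cut down the dimension of the support of $\Gamma$ by one at each step, and obtaining the sharp bound $\dim W = d - \kappa$ (rather than a weaker estimate such as $d - 1$) genuinely exploits the full range of the hypothesis.
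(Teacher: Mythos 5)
The paper offers no proof of this proposition: it is quoted directly from Paranjape \cite[Theorem 6.3]{paranjape}, with only the remark that it rests on the Bloch--Srinivas method of decomposition of the diagonal. Your sketch is a correct reconstruction of exactly that underlying argument, so there is no divergence of method to report. The deduction of (ii) and (iii) from the decomposition is complete and correct: for $\alpha \in CH_i(Y)_\bbQ$ with $i > d-\kappa$ the index $j=d-i$ lies in $\{0,\dots,\kappa-1\}$ and forces $\alpha$ to be a multiple of $h^{d-i}|_Y$ (nonzero since $\deg(h^{d-i}\cdot h^i\cdot [Y])=\deg(Y)\neq 0$), while for $i=d-\kappa$ no product term acts and $CH_{d-\kappa}(Y)_\bbQ$ is a quotient of the top-dimensional Chow group of $W$, which is spanned by the components of $W$. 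The step you leave open --- the generalized decomposition of the diagonal with $\dim W \le d-\kappa$ --- is precisely the content of Paranjape's theorem (see also Laterveer, and Voisin's exposition of the generalized Bloch--Srinivas decomposition), and your description of how it is obtained is accurate. Two points deserve explicit attention if you were to write it out: first, the inductive localization argument uses the hypothesis $CH_j(Y)_\bbQ\simeq\bbQ$ not over $k$ but over the algebraic closures of the function fields of $Y$ and of the successive supports, so one must invoke the standard specialization lemma that triviality of $CH_j(Y_k)_\bbQ$ for $k$ algebraically closed persists under extension to any larger algebraically closed field; second, whether $\Gamma$ ends up supported on $Y\times W$ or on $W\times Y$ is immaterial, since the diagonal is symmetric and one may transpose the whole decomposition. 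Neither point is an error, only places where the ``careful function-field/specialization analysis'' you flag actually lives.
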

\begin{corollary}\label{cor:diagonal}
Let $X$ be a complete intersection of three odd-dimensional quadrics. In this case, $CH_i(X)_\bbQ$ is a
finite dimensional $\bbQ$-vector
space for every $i \neq m-2$.
\end{corollary}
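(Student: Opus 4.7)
The plan is to deduce this directly from Theorem~\ref{thm:main}(ii) combined with Paranjape's Proposition~\ref{prop:consequences}. The point is that all the hard work has already been done: Theorem~\ref{thm:main}(ii) establishes Conjecture~\ref{SPconjecture} for a complete intersection $X$ of three odd-dimensional quadrics, and Proposition~\ref{prop:consequences} converts that conjecture into finite-dimensionality statements across a wide range of indices.

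Concretely, first I would record the numerics. With $n=2m$, $r=3$, and $d_1=d_2=d_3=2$ we have $\dim(X)=2m-3$ and $\kappa = m-2$, so that $d-\kappa = (2m-3)-(m-2) = m-1$. Next, since $X$ satisfies Conjecture~\ref{SPconjecture} by Theorem~\ref{thm:main}(ii), the three conclusions of Proposition~\ref{prop:consequences} apply to $X$: part (i) yields $CH_i(X)_\bbQ \simeq \bbQ$ for every $i < m-2$; part (ii) yields $CH_i(X)_\bbQ \simeq \bbQ$ for every $i > m-1$; and part (iii) yields that $CH_{m-1}(X)_\bbQ$ is a finite dimensional $\bbQ$-vector space.

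Finally, I would simply combine these. The integer interval $\{0,1,\dots,2m-3\}$ of admissible indices for $CH_i(X)_\bbQ$ decomposes as $\{i < m-2\} \sqcup \{m-2\} \sqcup \{m-1\} \sqcup \{i > m-1\}$, and by the three items above the Chow groups are finite dimensional on every piece except possibly the singleton $\{m-2\}$. This is precisely the statement, so the corollary follows.

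There is essentially no obstacle here: the content is entirely in Theorem~\ref{thm:main}(ii) (to unlock Paranjape's conclusions) and in Proposition~\ref{prop:consequences} itself. The only thing to be careful about is the arithmetic $d-\kappa = m-1 \neq \kappa = m-2$, which is what ensures Paranjape's three ranges cover all indices apart from the middle one $i=m-2$.
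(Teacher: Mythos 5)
Your proof is correct and is exactly the paper's argument (the paper just states it more tersely): apply Theorem~\ref{thm:main} to unlock Proposition~\ref{prop:consequences}, note $d-\kappa=m-1$ and $\kappa=m-2$, and observe that the three ranges cover every index except $i=m-2$. Your explicit bookkeeping of the index decomposition is a helpful expansion of the same reasoning.
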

\begin{proof}
Recall from above that $\kappa=m-2$ and $d=2m-3$. Hence, the proof follows from the combination of Theorem \ref{thm:main}
with Proposition \ref{prop:consequences}.
\end{proof}
We now have all the ingredients necessary for the proof.
In codimensional notation Corollary~\ref{cor:diagonal} shows that $CH^i(X)_\bbQ$ is a finite dimensional $\bbQ$-vector space for every $i \neq m-1$. This implies that the subspaces $A^i(X)_\bbQ \subset CH^i(X)_\bbQ, i \neq m-1$, are finite dimensional that that their images under the Abel-Jacobi maps
\begin{eqnarray*}
AJ^{i-1}: A^i(X)_\bbQ \twoheadrightarrow J^{i-1}(X)_\bbQ && i \neq m-1
\end{eqnarray*}
are isogeneous to zero. We conclude then that $J_a^i(X)=0$ for every $i \neq m-2$.
\begin{remark}
The above proof can be easily adapted to the case of a complete intersection of two quadrics.
\end{remark}

%--------------------------------------------------------------------------------------------
\section{Proof of Corollary \ref{thm:application1}}
%--------------------------------------------------------------------------------------------

%----------------------------------------------------------------------
\subsection*{Two odd-dimensional quadrics}
%----------------------------------------------------------------------
Let $n=2m$ for some integer $m\geq 2$. Since we are intersecting two odd-dimensional quadrics, we have $d:=\mathrm{dim}(X)=2m-2$. As explained in the proof of
Proposition~\ref{prop:2-odd-quadrics-have-finite-chow}, there exists a choice of integers $l_1, \ldots, l_{4m} \in \{0, \ldots,d\}$ giving rise to an isomorphism
$$ M(X)_\bbQ \simeq {\bf L}^{\otimes l_1} \oplus \cdots \oplus {\bf L}^{\otimes l_{4m}}\,.$$
Note that this isomorphism implies automatically that $M(X)_\bbQ$ is Kimura-finite. Using the following computation (see \eqref{eq:decomp-final})
\begin{equation*}
CH_i(X)_\bbQ \simeq \left\{ \begin{array}{lcl}
         \bbQ & \mbox{if} & i\neq m-1\\
        \bbQ^{\oplus (2m+2)} & \mbox{if}   & i=m-1,
        \end{array} \right.\,
\end{equation*}
one concludes that
\begin{equation}\label{eq:decomp1-1}
M^i(X)_\bbQ = \left\{ \begin{array}{lcl}
         {\bf L}^{\otimes \frac{i}{2}} & \mbox{if} & 0 \leq i \leq 4m-4,\,\, i \neq 2m-2,\,\, i \,\, \mbox{even} \\
        ({\bf L}^{\otimes m-1})^{\oplus 2m+2} & \mbox{if}   & i=2m-2\\
        0 && \mathrm{otherwise.}
        \end{array} \right.\,
\end{equation}
Finally, using the fact that $d=2m-2$, we observe that \eqref{eq:decomp1-1} agrees with \eqref{eq:decomp11}.
%----------------------------------------------------------------------
\subsection*{Two even-dimensional or three odd-dimensional quadrics}
%----------------------------------------------------------------------
Let $m$ be an integer $\geq 2$. Since we are intersecting two even-dimensional or three odd-dimensional quadrics,
we have $d:=\mathrm{dim}(X)=2m-3$ and $\kappa=m-2$. Recall that the group $A_i(Y)_\bbQ$ of a $k$-scheme $Y$
is called {\em rationally representable}
if there exists a curve $\Gamma$ and an algebraic surjective map $A_0(\Gamma)_\bbQ \twoheadrightarrow
 A_i(Y)_{\bbQ}$.
\begin{theorem}{(Vial \cite[Theorem~4]{Vial})}\label{thm:Vial2}
Given an irreducible $k$-scheme $Y$ of dimension $d$, the following statements are equivalent:
\begin{itemize}
\item[(i)] One has the following motivic decomposition
\begin{equation}\label{eq:decomp-Vial}
M^j(Y)_\bbQ = \left\{ \begin{array}{lcl}
         ({\bf L}^{\otimes l})^{\oplus a_l} & \mbox{if} & j=2l \\
        M^1(J^l_a(Y))_\bbQ(l-d) & \mbox{if}   & j=2l+1 \,\, 
        \end{array} \right.\,
        \end{equation}
for some integer $a_l \geq 1$.        
\item[(ii)] The groups $A_i(Y_\bbC)_\bbQ, 0 \leq i \leq d$, are rationally representable.
\end{itemize}
\end{theorem}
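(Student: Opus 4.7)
The plan is to prove the equivalence via a Bloch--Srinivas style decomposition of the diagonal, which is the standard bridge between rational representability of Chow groups and the existence of a refined Chow--K\"unneth decomposition. The strategy splits along the two implications, with the heart of the argument being the construction of mutually orthogonal idempotent correspondences realizing the summands in \eqref{eq:decomp-Vial}.

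For the direction (i) $\Rightarrow$ (ii), I would read off the Chow groups directly from \eqref{eq:decomp-Vial}. Each even summand $({\bf L}^{\otimes l})^{\oplus a_l}$ contributes only rationally trivial cycles in codimension $l$, so it gives no contribution to $A_\ast(Y)_\bbQ$. Each odd summand $M^1(J^l_a(Y))_\bbQ(l-d)$ contributes precisely $A_{d-l-1}(Y)_\bbQ$ (via the defining property of $M^1$ of an abelian variety together with the Abel--Jacobi identification recalled in \S\ref{sec:Jacobians}), and since $J^l_a(Y)$ is by definition generated, up to isogeny, by images of Jacobians of curves under Abel--Jacobi maps, one obtains the required surjection from $A_0$ of a curve.

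For the converse (ii) $\Rightarrow$ (i), I would follow Bloch--Srinivas and its successive refinements by Murre, Jannsen and Vial. The assumption that every $A_i(Y_\bbC)_\bbQ$ is rationally representable by a curve yields, after spreading, a decomposition of the diagonal $\Delta_Y \in CH^d(Y \times Y)_\bbQ$ as a sum of correspondences factoring through curves, plus correspondences supported on products of lower dimension. Inductively lifting this decomposition to a family of mutually orthogonal idempotents, one cuts out of $M(Y)_\bbQ$ the summands of \eqref{eq:decomp-Vial}: support conditions force the even pieces to be of Lefschetz type, while the odd pieces factor through $M^1(C)$ for some auxiliary curve $C$, hence through its Jacobian, and matching cycle classes and Abel--Jacobi images identifies this abelian quotient with $J^l_a(Y)$.

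The main obstacle is to realize the decomposition \eqref{eq:decomp-Vial} in $\Chow(k)_\bbQ$ rather than just up to isogeny or only numerically: one must control the precise Tate twist $(l-d)$ and obtain the actual abelian variety $J^l_a(Y)$ (and not merely some isogenous one) as the target of the $M^1$-summand. This requires a careful joint analysis of the Abel--Jacobi map, the Bloch--Srinivas representatives, and the Poincar\'e--type self-duality that pairs the summand in codegree $2l+1$ with one in codegree $2(d-l)-1$. Once these compatibilities are established, the projectors can be normalized so that the odd summands assemble into the precise form stated in \eqref{eq:decomp-Vial}, completing the equivalence.
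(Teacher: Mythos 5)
This statement is not proved in the paper at all: it is quoted verbatim, with attribution, as Theorem~4 of Vial's article \cite{Vial}, and the paper's ``proof'' consists of the citation. There is therefore no internal argument to compare yours against; the relevant comparison is with Vial's own proof.

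Measured against that, your outline identifies the right strategy --- it is essentially the one Vial uses: a Bloch--Srinivas decomposition of the diagonal driven by the rational representability hypothesis, refined step by step into a family of mutually orthogonal idempotents whose even pieces are of Lefschetz type and whose odd pieces factor through $M^1$ of a curve, hence through an abelian variety identified with $J^l_a(Y)$ via Abel--Jacobi. The easy direction (i)~$\Rightarrow$~(ii) is also handled correctly in spirit, though you should note that rational representability asks for a surjection $A_0(\Gamma)_\bbQ \twoheadrightarrow A_i(Y)_\bbQ$ from a \emph{single} curve $\Gamma$, so one needs the (standard but not free) fact that the abelian variety $J^l_a(Y)$ is dominated by the Jacobian of one curve. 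The genuine gap is that the hard direction remains a plan rather than a proof: you explicitly defer the construction and orthogonalization of the projectors, the verification that the odd summand is $M^1(J^l_a(Y))_\bbQ(l-d)$ with the correct Tate twist and the correct abelian variety up to isomorphism in $\Chow(k)_\bbQ$ (not merely up to isogeny or homologically), and the self-duality pairing the pieces in degrees $2l+1$ and $2(d-l)-1$. These are precisely the points where the real work lies in \cite{Vial}, so as written the proposal is a correct roadmap to Vial's theorem rather than an independent proof of it. For the purposes of this paper, simply citing \cite[Theorem~4]{Vial} --- as the authors do --- is the appropriate resolution.
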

Now, recall from Theorem~\ref{thm:main2}(ii) that $J^i_a(X)=0$ for $i \neq m-1$. This implies that the groups
$A_i(X)_\bbQ$ are trivial for $i \neq m-2$ and consequently that they are rationally representable. In the case of a complete
intersection of two even-dimensional quadrics,
we have an algebraic isomorphism $A_{m-2}(X)_\bbQ \simeq
A_0(C)_\bbQ \simeq J(C)_\bbQ$; consult Reid \cite[Theorem 4.14]{reid:thesis} for details.
In the case of a complete intersection of three odd-dimensional quadrics, we have an algebraic isomorphism $A_{m-2}(X)_\bbQ \simeq
A_1(Q')_\bbQ \simeq \mathrm{Prym}(\widetilde{C}/C)_\bbQ$ which is induced by an algebraic surjective map $A_0(\widetilde{C}) \twoheadrightarrow
A_{m-2}(X)$;
consult Beauville \cite[Theorems 3.1 and 6.3]{beauvilleprym} for details. Therefore, in both cases, the group $A_{m-2}(X)_\bbQ$ (and hence $A_{m-2}(X_\bbC)_\bbQ$) is also rationally representable. This implies condition (ii) of Theorem~\ref{thm:Vial2}.
As explained in the proof of Theorem~\ref{thm:main}, we have $CH_i(X)_\bbQ \simeq \bbQ$ for $i \neq m-2$ and $CH_{m-2}(X)_\bbQ/A_{m-2}(X)_\bbQ \simeq \bbQ$. As a consequence, all the $a_i$'s are equal to $1$ and the above decomposition
\eqref{eq:decomp-Vial} agrees with \eqref{eq:decomp22}.
%--------------------------------------------------------------------------------------------
\section{Proof of Corollary \ref{thm:application2}}
%--------------------------------------------------------------------------------------------
Let $X$ be a $k$-scheme. Recall that the Chow group $CH_i(X)_\bbQ$ has niveau $\leq r$
 if there exists a closed subscheme $Z \subset X$ of dimension $i + r$ such that the proper pushforward map $CH_i (Z_\bbC)
 \to CH_i (X_\bbC)$ is surjective.
\begin{theorem}{(Vial \cite[Theorem~7.1]{vial-fibrations})}\label{thm:Vial3}
Let $X$ be an irreducible $k$-scheme of dimension $d$. Assume that the Chow groups $CH_0(X_\bbC), \ldots, CH_l(X_\bbC)$
have niveau $\leq n$. Under these assumptions, the following holds:
 \begin{itemize}
 \item[(i)] If $n=3$ and $l=\lfloor \frac{d-4}{2} \rfloor$, then $X$ satisfies Hodge's conjecture;
 \item[(ii)] If $n=2$ and $l=\lfloor \frac{d-3}{2} \rfloor$, then $X$ satisfies Lefschetz's standard conjecture; 
 \item[(iii)] If $n=1$ and $l=\lfloor \frac{d-3}{2} \rfloor$, then $X$ satisfies Murre's conjecture;
 \item[(iv)] If $n=1$ and $l=\lfloor \frac{d-2}{2} \rfloor$, then $M(X)_\bbQ$ is Kimura-finite,
 \end{itemize}
where $\lfloor - \rfloor$ stands for the floor function. 
 \end{theorem}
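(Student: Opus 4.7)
The plan is to derive all four statements from a single input: a sufficiently refined \emph{decomposition of the diagonal} $\Delta_X \in CH_d(X_\bbC \times X_\bbC)_\bbQ$, obtained by iterating the Bloch--Srinivas argument using the niveau hypotheses. First I would translate the hypothesis ``$CH_i(X_\bbC)_\bbQ$ has niveau $\leq n$'' into the assertion that $CH_i(X_\bbC)_\bbQ$ is supported, via proper pushforward, on a closed subscheme $Z_i \subset X$ of dimension $i+n$. Restricted to the generic point of one factor of $X \times X$, this says that the diagonal cycle is rationally equivalent to a cycle supported on $Z_0 \times X$; spreading this relation out and repeating it dimension by dimension, I expect to obtain an identity in $CH_d(X_\bbC \times X_\bbC)_\bbQ$ of the form
\[
\Delta_X \;=\; \sum_{i=0}^{l} \Gamma_i \;+\; R,
\]
where each $\Gamma_i$ is supported on $Z_i \times W_i$ with $\dim Z_i = i+n$ and $\dim W_i = d-i$, and the ``remainder'' $R$ is supported on $X \times T$ with $\dim T < d-l$.

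Next I would refine this identity into a genuine Chow--K\"unneth decomposition. By blowing up the $Z_i$ and $W_i$ and manipulating the resulting correspondences, the $\Gamma_i$ can be upgraded to mutually orthogonal idempotents $\pi_{2i}, \pi_{2i+1} \in CH_d(X_\bbC \times X_\bbC)_\bbQ$ cutting out submotives of $M(X)_\bbQ$ which are either Tate twists of $M(Z_i)_\bbQ$ or of the form $M^1(J_a^i(X))_\bbQ(i-d)$. For part (iii) (Murre's conjecture, $n=1$), verifying the required cohomological properties of the $\pi_j$'s (correct action on $H^j$, mutual orthogonality, and the vanishing on Chow groups required by Murre) is then straightforward from the niveau hypothesis combined with Poincar\'e duality: the dual niveau hypothesis on $CH^i$ follows automatically from the one on $CH_{d-i}$, and $l = \lfloor (d-3)/2 \rfloor$ is precisely enough to cover both ``halves'' of the cohomology symmetrically.

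For parts (i) and (ii) I would exploit that the decomposition above forces the transcendental cohomology of $X$ to be dominated by correspondences coming from subvarieties of dimension $\leq l+n$. For (ii), with $n=2$, the inverse of the Lefschetz operator reduces to a question on surfaces, where it is algebraic by the $(1,1)$-theorem. For (i), with $n=3$, Hodge's conjecture for $X$ is reduced by these correspondences to Hodge's conjecture in dimension $\leq 3$, which is classical. For part (iv), Kimura-finiteness is stable under direct summands, tensor products and Tate twists, and is known for motives of curves (Kimura, Guletskii--Pedrini); the slightly wider range $l = \lfloor (d-2)/2 \rfloor$ at $n=1$ gives exactly the control needed to realize $M(X)_\bbQ$ as a direct summand of a motive assembled from Tate twists of motives of curves, hence Kimura-finite.

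The hard part will be the bookkeeping in the iterated Bloch--Srinivas step: at each stage one must verify that the remainder $R$ really is supported on a subscheme of the expected strictly smaller dimension, and that the resulting classes in $CH_d(X_\bbC \times X_\bbC)_\bbQ$ can be made genuinely orthogonal rather than merely idempotent modulo a nilpotent ideal (this is the usual pitfall when promoting a Bloch--Srinivas decomposition to a Chow--K\"unneth projector). The precise numerical thresholds $\lfloor (d-4)/2 \rfloor$, $\lfloor (d-3)/2 \rfloor$, $\lfloor (d-2)/2 \rfloor$ arise from this combinatorial count: by Poincar\'e duality one must decompose the diagonal symmetrically from ``both sides'', and the floor function records the parity discrepancy between even- and odd-dimensional $X$, with the extra leeway in case (iv) matching the extra algebraic information needed to push the finite-dimensionality argument through the middle degree.
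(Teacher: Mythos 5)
First, a point of orientation: the paper does not prove this statement at all --- it is quoted verbatim as Vial's Theorem~7.1 from \cite{vial-fibrations} and used as a black box, so there is no internal proof to compare against. Your sketch is, in outline, the strategy Vial actually follows (a generalized Bloch--Srinivas decomposition of the diagonal driven by the niveau hypotheses, refined into a Chow--K\"unneth decomposition), so the approach is the right one; but as written it is a plan, not a proof, and the two steps you defer are precisely where all the content lies.

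The more serious of the two is not the orthogonalization issue you flag, but the reduction of dimension. Your correspondences $\Gamma_i$ are supported on $Z_i \times W_i$ with $\dim Z_i = i+n$, and $i+n$ grows with $d$; yet in parts (i) and (ii) you claim the Hodge conjecture (resp.\ the Lefschetz standard conjecture) for $X$ reduces to the case of threefolds (resp.\ surfaces, via the $(1,1)$-theorem). That reduction requires showing that each motivic piece cut out by $\Gamma_i$ is a direct summand of a Tate twist of the motive of a variety of dimension $\leq n$ --- not merely of dimension $\leq i+n$. This is the key ``refined'' step in Vial's argument: one must play the two factorizations of $\Gamma_i^*$ (through $H^j(\widetilde{Z_i})$ on one side and through $H^{j-2i}(\widetilde{W_i})$ on the other) against each other, together with hard Lefschetz, to bound the level of the relevant sub-Hodge structures by $n$ and then realize them on varieties of dimension $n$. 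Nothing in your sketch produces this, and without it statements (i) and (ii) do not follow. Relatedly, your appeal to ``Poincar\'e duality'' to get the dual niveau hypothesis is not a statement about Chow groups (for smooth $X$ one has $CH^i = CH_{d-i}$ by definition, so the assertion is vacuous as phrased); what actually covers the upper half of the cohomology is the transpose of the diagonal decomposition acting on $H^{2d-j}$, which is a cohomological, not cycle-theoretic, mechanism. Finally, the passage from the Bloch--Srinivas output to \emph{mutually orthogonal} idempotents lifting the K\"unneth components --- which you correctly identify as ``the usual pitfall'' --- is exactly the content of Vial's projector constructions and cannot be waved through; Murre's conjecture in (iii) and the Kimura-finiteness in (iv) both depend on having genuine Chow--K\"unneth projectors with the stated supports, not classes that are idempotent only modulo homological equivalence.
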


\begin{theorem}{(Vial \cite[Theorem 6.10]{vial-fibrations})}\label{thm:vial-fibrt}
Let $\pi : X \to B$ be a smooth dominant flat morphism between $k$-schemes. Assume that $CH_i (X_b) = \bbQ$ for all $i < l$ and all closed point $b \in B$. Then, 
$CH_i (X)_\bbQ$ has niveau $\leq \mathrm{dim}(B)$ for all $i < l$.
\end{theorem}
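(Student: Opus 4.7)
The plan is to run a relative Bloch--Srinivas decomposition of the diagonal and then induct on $\dim(B)$.

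First, I would produce a ``horizontal'' generator of the fiber Chow groups. Since $\pi$ is smooth and flat and we work in characteristic zero, a spreading-out argument shows that if $CH_i(X_b)_\bbQ = \bbQ$ for every closed point $b$ of an irreducible component of $B$, then also $CH_i(X_\eta)_\bbQ = \bbQ$ for the generic point $\eta$ and all $i < l$. Fixing a relatively ample embedding of $X/B$, a generic linear section produces a closed subscheme $W_i \subset X$ of dimension $i + \dim(B)$ whose restriction to $X_b$ generates $CH_i(X_b)_\bbQ$ for every $b$ in a dense open $U \subseteq B$.

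Second, I would apply the Bloch--Srinivas decomposition of the diagonal to the generic fiber. The triviality $CH_i(X_\eta)_\bbQ = \bbQ$ for $i < l$ yields a decomposition of the relevant part of $[\Delta_{X_\eta}]$ in $CH_\ast(X_\eta \times_\eta X_\eta)_\bbQ$ as a sum of correspondences of the form $p_1^\ast(\gamma_i) \cdot p_2^\ast(w_i)$, where $w_i$ is the class of $W_i$ restricted to $X_\eta$, modulo cycles supported on a proper closed subscheme of $X_\eta \times_\eta X_\eta$. Spreading this identity gives, over some open $U \subseteq B$, a decomposition in $CH_\ast(X_U \times_U X_U)_\bbQ$ with error supported on $\pi^{-1}(B \setminus U) \times_B X_U$. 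Acting by the decomposed correspondence on an arbitrary class $\alpha \in CH_i(X_\bbC)_\bbQ$ with $i < l$ then expresses $\alpha|_{X_{U,\bbC}}$ as a proper pushforward from $W_{i,\bbC}$.

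Third, I would close the induction on $\dim(B)$. The residual piece of $\alpha$ is supported on $\pi^{-1}(B \setminus U)$, which fibers (after resolving and base-changing to components) over a scheme of strictly smaller dimension than $B$; since $CH_i$ of every fiber of this restricted family is still $\bbQ$ (the fibers are unchanged), the inductive hypothesis produces a subscheme $Z' \subset \pi^{-1}(B \setminus U) \subset X$ of dimension $\leq i + \dim(B \setminus U) \leq i + \dim(B)$ whose $CH_i(Z'_\bbC)$ surjects onto the residual class. Taking $Z := W_i \cup Z'$ gives a closed subscheme of $X$ of dimension $i + \dim(B)$ with $CH_i(Z_\bbC) \twoheadrightarrow CH_i(X_\bbC)$ surjective.

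The main obstacle is executing the spread-out Bloch--Srinivas step cleanly: one must track carefully how the decomposition of $[\Delta_{X_\eta}]$ extends to $X_U \times_U X_U$, ensure that the error term is genuinely a $\pi$-pullback from a lower-dimensional base so that induction is applicable, and verify that the action of the spread-out correspondence commutes with base change to $\bbC$. Secondary technical points include handling the possible reducibility or non-regularity of $B \setminus U$ (via a dominant resolution, which does not affect the niveau bound) and choosing the generating $W_i$ compatibly for all $i < l$ at once.
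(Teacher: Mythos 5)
The paper does not prove this statement at all: it is imported verbatim from Vial \cite[Theorem 6.10]{vial-fibrations}, so there is no internal proof to compare against. Your reconstruction --- iterated Bloch--Srinivas decomposition of the diagonal of the generic fibre, spreading over a dense open $U \subseteq B$, and induction on $\mathrm{dim}(B)$ applied to the \emph{fixed} residual family over $B \setminus U$ --- is in substance the argument Vial gives. You have also correctly isolated the point a naive argument misses: the niveau statement requires a single subscheme $Z$ working for every class simultaneously, and it is precisely the diagonal decomposition (one correspondence identity, one bad locus) that supplies this uniformity; a class-by-class localization argument would produce a bad locus depending on $\alpha$, and the induction would not close. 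Your dimension bookkeeping is also consistent: for $\alpha \in CH_i(X)_\bbQ$ the product terms $\gamma_j \times w_j$ with $j > i$ act as zero (the refined Gysin map lands in $CH_{i-j}$ of a subscheme, which vanishes for $i<j$), so only the spreads $W_j$ with $\mathrm{dim}(W_j) = j + \mathrm{dim}(B) \leq i + \mathrm{dim}(B)$ contribute.

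The one step you treat too casually is the very first one. Passing from ``$CH_i(X_b)_\bbQ = \bbQ$ for all closed points $b \in B$'' to ``$CH_i(X_\eta)_\bbQ = \bbQ$'' is not a spreading-out argument in the usual direction: spreading out deduces statements about very general closed fibres \emph{from} the generic fibre, and the converse implication is the delicate one. It requires identifying the geometric generic fibre with the fibre over a very general closed point, which is only available over an uncountable (universal) domain; moreover, to run the iterated Bloch--Srinivas decomposition for all $j < l$ you need Chow triviality of the fibre not merely over $k(B)$ but over the function fields of $X_\eta$ itself, i.e.\ over arbitrary finitely generated extensions. Over a countable $k$ the hypothesis as literally stated does not obviously yield any of this. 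The repair is to base-change to $\bbC$ at the outset (the conclusion only concerns $CH_i(Z_\bbC) \to CH_i(X_\bbC)$ anyway) and to note that in the intended application the fibre condition is stable under extension of the base field, so it holds for all closed points of $B_\bbC$; every relevant extension field then embeds into $\bbC$ and the very-general-point identification applies. Your remaining caveats --- resolving $B\setminus U$ so that the inductive family again has smooth total space, and using surjectivity of proper pushforward on rational Chow groups to descend the niveau bound --- are the right ones and are handled exactly as you indicate.
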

We now have all the ingredients needed for the proof of Corollary~\ref{thm:application2}.
The proof will consist on verifying the conditions of Theorems \ref{thm:Vial3}-\ref{thm:vial-fibrt}. Recall that $f:Y \to B$ is a smooth dominant flat morphism whose fibers are complete intersections either of two quadrics or of three odd-dimensional quadrics. In these cases, Theorem \ref{thm:main}
implies that $CH_i(Y_b)_\bbQ\simeq\bbQ$ for $i < \kappa = [\mathrm{dim}(Y_b)/2]$. As a consequence, $CH_l(Y)$ has niveau $\leq \dim(B)$ for $l \leq [\frac{d-\dim(B)-1}{2}]$. By applying Theorem \ref{thm:Vial3} we then obtain the items
(i)-(iii) of Corollary~\ref{thm:application2}.

\end{document}